\newtheorem{theorem}{Theorem}[section]
\newtheorem{lemma}[theorem]{Lemma}
\newtheorem{definition}[theorem]{Definition}
\newtheorem{remark}[theorem]{Remark}
\font\bigbf=cmbx10 scaled \magstep3
\begin{document}

\title{\bigbf The Share-a-Ride Problem with mixed ride-hailing and logistic vehicles}

\author{Wen Ji$^{a}$
\quad 
Shenglin Liu$^{a}$
\quad
Ke Han$^{b}\thanks{Corresponding author, e-mail: kehan@swjtu.edu.cn;}$
\quad 
Yanfeng Li$^{b}$
\quad 
Tao Liu$^{a}$
\\\\
$^{a}$ \textit{\small School of Transportation and Logistics,}\\
\textit{\small Southwest Jiaotong University, Chengdu, Sichuan 611756, China}\\
$^{b}$ \textit{\small School of Economics and Management,}\\
\textit{\small Southwest Jiaotong University, Chengdu, Sichuan 610031, China}
}

\maketitle

\begin{abstract}

This study explores the potential of using ride-hailing vehicles (RVs) for integrated passenger and freight transport based on shared mobility. In this crowd-sourced mode, ride-hailing platforms can profit from parcel delivery services, and logistics companies can reduce operational costs by utilizing the capacities of RVs. The \underline{S}hare-\underline{a}-\underline{R}ide \underline{p}roblem with \underline{r}ide-hailing and \underline{l}ogistic vehicles (SARP-RL) determines the number of logistic vehicles (LVs) and the assignment of passenger/parcel requests to RVs and LVs, aiming at maximizing the total RV profits and minimizing logistic costs. An exact solution framework is proposed by (1) generating a feasible trip that serves a given set of requests at maximal profits; (2) generating all feasible trips for the entire set of passenger and parcel requests via an efficient enumeration method; and (3) finding all Pareto-optimal solutions of the bi-objective problem via an $\varepsilon$-constraint method. Not only is the proposed method exact, it also converts the NP-hard problem to a simple vehicle-trip matching problem. More importantly, the total computational time can be compressed to an arbitrary degree via straightforward parallelization. A case study of the Manhattan network demonstrates the solution characteristics of SARP-RL. The results indicate that: (i) Coordinating RV and LV operations to serve passenger and parcel requests (SARP-RL) can simultaneously reduce logistic costs and increase RV profits. (ii) Key factors influencing the performance of SARP-RL include the RV fleet size, spatial distribution of parcel requests, passenger/parcel request ratio, and unit price of transport service, which are quantitatively analyzed to offer managerial insights for real-world implementation.
\end{abstract}

\noindent {\it Keywords: Share-a-Ride problem; ride-hailing vehicles; trip-vehicle assignment; bi-objective optimization; route planning}

\section{Introduction}

In urban transport systems, passenger and freight services are typically operated independently. However, it has been realized that better utilization of vehicle and road capacity can be achieved via integrated passenger-good transport \citep{LKRV2014, MPC2019}. Since then, several researchers have investigated the synergistic potential of such integrated systems, involving ride-hailing vehicles \citep{LDLY2022, RJCYXZL2021, ZMY2022, BSN2018}, buses \citep{AAG2021, PFWJY2021, HG2023, MPS2023}, metros \citep{DYSZYG2022, DHYRZ2018}, trains \citep{SBBC2022, BBC2018} and private cars \citep{MG2022, VK2022}.

Ride-hailing vehicles (RVs) are suitable for providing intra-city door-to-door delivery services for both passengers and parcels for their high flexibility. They can deliver small parcels such as mails, documents, and takeaway meals while serving passengers. The economic drive behind such a crowd-sourced transport mode is the expectation that ride-hailing platforms can profit from parcel delivery services, and logistics companies can reduce operational costs. \cite{LKRV2014} are the first to investigate the potential benefits of combining passenger and parcel flows using RVs, by formulating the {\it share-a-ride problem} (SARP) and designing a greedy solution method. The SARP has a restriction that does not allow two passengers to be served simultaneously by one vehicle, to ensure high-quality passenger service. Subsequently, some variants of the SARP have been proposed and studied in the literature, including the SARP with stochastic travel times and stochastic delivery locations \citep{LKVR2016b}, the general SARP, which allows the vehicle to transport more than one passenger at the same time \citep{YPRSJ2018, BSN2018}, the SARP with flexible compartments, which allows vehicles to adjust their compartment sizes for parcel delivery. The models of SARP and its extensions developed in the above studies are all based on the well-known {\it dial-a-ride problem} (DARP) \citep{Cordeau2006}, which is an NP-hard problem \citep{BKS1998}. The computational complexity of the SARP motivated these studies to resort to heuristic algorithms for large-scale instances, including the adaptive large neighborhood search  \citep{LKVR2016a, LKVR2016b}, simulated annealing \citep{YPRSJ2018, YIRL2021} and matheuristic \citep{YMGIJ2023}. 

In the real world, both passenger and parcel requests are exogenously given and need to be served. Existing studies only focus on optimizing RV operations without considering their coordination with logistic vehicles (LVs) that are set out to meet all parcel requests. In other words, they did not explicitly address cost reduction on the logistics' part, rendering limited insights regarding the impact of this emerging mode on passenger and freight transport. Motivated by this, this study investigates the integrated RV and LV operations to serve passenger and parcel requests, coined the \underline{s}hare-\underline{a}-\underline{r}ide \underline{p}roblem with \underline{R}Vs and \underline{L}Vs (SARP-RL), in which passenger requests can only be served by RVs while parcel requests can be served by both RVs and LVs. This problem encompasses request selection, route planning, and vehicle-request matching, which is NP-hard. We propose an exact solution method based on a decomposition scheme that reduces the computational complexity by converting the SARP-RL into a simple vehicle-trip matching problem. Specific contributions of this work are as follows:

\begin{itemize}
    \item \textbf{Conception and model.} Unlike conventional SARP that only focuses on RV operations, we explore the coordination between RV and LV operations, with a balanced consideration of RV profits and logistic costs via an $\varepsilon$-constraint method to obtain all Pareto-optimal solutions. This model takes a holistic view of integrated passenger-freight transport based on shared mobility, and sheds light on its economic implication for the logistic industry. 
    
    \item \textbf{Solution method.} A problem decomposition scheme is devised by generating all trips based on an efficient enumeration method, converting the NP-hard SARP-RL (consisting of request selection, route planning, and vehicle-request matching) into a simple vehicle-trip matching problem. The latter can be solved as mixed integer linear programs efficiently using off-the-shelf solvers.
   
    \item \textbf{Practical insights.} 
A case study of the Manhattan network demonstrates the solution characteristics of SARP-RL. The results indicate that: (1) Coordinating RV and LV operations to serve passenger and parcel requests (SARP-RL) can simultaneously reduce logistic costs and increase RV profits. (2) Key factors influencing the performance of SARP-RL include the RV fleet size, spatial distribution of parcel requests, passenger/parcel request ratio, and unit price of transport service, which are quantitatively analyzed to offer managerial insights for real-world implementation.
\end{itemize}

The remainder of this paper is organized as follows. Section \ref{sec_RW} reviews some relevant works. The Share-a-Ride Problem with mixed ride-hailing and logistic vehicles is formally defined in Section \ref{sec_PS}. Section \ref{sec_SF} develops a solution framework of SARP-RL. A case study of our methods is presented in Section \ref{sec_CS}, followed by some concluding remarks in Section \ref{sec_CD}.

\section{Related work} \label{sec_RW}
    Work related to this study is divided into three parts: people and freight integration systems, ride-sharing systems, and share-a-ride problem.

\subsection{Integrated passenger and freight transport}
    Crowd-sourced delivery systems have created many new delivery solutions and have received some attention in the academic literature \citep{AGB2021}. Some studies focus on people and freight integration systems: \cite{LKRV2014} and \cite{ZMY2022} utilize taxis or shared autonomous vehicles to provide door-to-door services for both passengers and parcels. The vehicle with passengers can simultaneously serve some parcel requests. \cite{HC2023} integrate freight transport into public transport. They consider a setting where freight originates from and is transhipped at several public transit stops. \cite{HJGC2023} focuses on exploring the potential of modular vehicle concepts and consolidation to enhance the efficiency of urban freight and passenger transport. \cite{DYSZYG2022} investigates a joint optimization problem of carriage arrangement and flow control in a metro-based underground logistics system, in which passengers and freights are allowable to share each service train. In summary, the integration of people and freight has been explored in various vehicle types. For a more detailed overview of this topic, I suggest referring to \cite{CJN2023}.
    
    In this study, we aim to explore the impact of combining passenger and parcel flows using RVs on RV profits, the costs of logistics companies, and passenger service levels.

\subsection{Ride-sharing systems}
    
    Ride-sharing systems aim to bring together travelers with similar requests. These systems may provide significant societal and environmental benefits by reducing the number of vehicles used for personal travel and improving the utilization of available seat capacity \citep{AESW2012}. 
    
    One of the key challenges in ride-sharing systems is the matching problem between drivers and requests \citep{WY2019}. For example, \cite{AESW2011} considers the problem of matching drivers and requests in the dynamic setting. They develop optimization-based approaches that aim at minimizing the total system-wide vehicle miles incurred by system users, and their travel costs. A simulation study based on 2008 travel demand data from metropolitan Atlanta indicates that ride-sharing may help to decrease traffic congestion and thereby reduce system-wide travel times. \cite{FKW2021} build a stylized model of a circular road and compare the average waiting times of passengers under various matching mechanisms. They find that the on-demand matching mechanism could result in higher or lower efficiency than the conventional street-hailing mechanism, depending on the parameters of the system. \cite{VSRSR2018} addresses the minimum fleet problem in an on-demand shared transportation service. Using taxi trip data from NYC for one year, they found that a method with near-optimal service levels would allow a 30\% reduction in fleet size by comparison with current operations. \cite{QDO2008} proposed a bus-sharing system named MAST. In MAST, vehicles may deviate from a fixed path consisting of a few mandatory checkpoints to serve demand distributed within a proper service area. \cite{FC2023} propose a mixed integer linear programming model for a dial-a-ride problem with modular platooning. The results show that modular vehicle technology can save up to 52\% in vehicle travel cost, 41\% in passenger service time, and 29\% in total cost against existing on-demand mobility services in the scenarios tested.

    This study allows passenger-and-parcel sharing, and parcel-and-parcel sharing, to solve the matching problem between vehicles and requests.

\subsection{Share-a-ride problem}
    The Dial-a-Ride problem (DARP) consists of designing vehicle routes and schedules for several users who specify pick-up and drop-off requests between origins and destinations. The aim is to design a set of minimum-cost set of vehicle routes accommodating all requests under several side constraints \citep{CL2003, Cordeau2006}. A more comprehensive review of the DARP can be found in \cite{HSKLPT2018}. The Share-a-Ride problem (SARP) was initially proposed by \cite{LKRV2014} to expand on the original DARP. In SARP, people and parcels can share the same taxis and allow rejections for both people and freight requests. 
    
    Some studies have focused on the SARP: \cite{LKRV2014} present MILP formulations for the SARP, which can be directly solved exactly using GUROBI. Due to the complexity of the problem, GUROBI can solve only small instances. To solve large-scale problems, they propose a greedy solution approach, which starts from a given route for handling the passenger requests and inserts the parcel requests into this route.
    \cite{LKVR2016a} propose an adaptive large neighborhood search (ALNS) heuristic to address the SARP. Compared to the MIP solver, their heuristic is superior in both the solution times and the quality of the obtained solutions if the CPU time is limited. \cite{LKVR2016b} consider two stochastic variants of the SARP: one with stochastic travel times and one with stochastic delivery locations. They design an adaptive large neighborhood search heuristic to solve the problems. \cite{YPRSJ2018} introduces an extension of the SARP, called the general share-a-ride problem (G-SARP). Different from the traditional SARP, G-SARP allows the vehicle to transport more than one passenger at the same time. A simulated annealing (SA) algorithm is proposed to solve G-SARP. \cite{BSN2018} propose the share-a-ride with parcel lockers problem (SARPLP), in which both passenger and parcel requests are pooled in mixed-purpose compartmentalized SAVs. Different from the traditional SARP, they also allow the vehicle to transport more than one passenger at the same time. \cite{YIRL2021} presents the share-a-ride problem with flexible compartments (SARPFC). The SARPFC allows taxis to adjust their compartment size within the lower and upper bounds while maintaining the same total capacity permitting them to service more parcels while simultaneously serving at most one passenger. They propose a new variant of the Simulated Annealing algorithm called Simulated Annealing with Mutation Strategy to solve SARPFC. \cite{YMGIJ2023} develop a new matheuristic algorithm that combines the simulated annealing with mutation strategy and the set partitioning approach to improve the reported solutions of SARP benchmark instances. These studies, which are further summarized in Table \ref{tab_SARP_study}, all formalize the problem similar to traditional DARP \citep{Cordeau2006, MHBD2016, MBC2017} and employ heuristic algorithms to solve large-scale instances due to the size and complexity of the model. 

    This paper differs from the aforementioned studies in two ways: (a) We propose a new variant of SARP, called the SARP-RL. This problem investigates the integrated RV and LV operations to serve passenger and parcel requests, maximize RV profits, and minimize the costs of logistics companies. (b) We propose an exact solution framework based on a problem decomposition approach to solve the SARP-RL, which can be efficiently solved using off-the-shelf solvers. Compared to directly using solvers \citep{LKRV2014, BSN2018}, our methods can more efficiently solve the problem.
    
    \begin{table}[H]
		\centering
		\caption{Relevant literature on SARP and its extensions.}
		\small{
		\begin{tabular}{|m{0.10\textwidth}| m{0.39\textwidth} |m{0.12\textwidth} | m{0.28\textwidth}|}
		\hline
		Study & Model features & Method type & Solution method
		\\
		\hline
		\cite{LKRV2014}  & Allow passenger-parcel sharing and parcel-parcel sharing.& Exact (4-12 requests) & Gurobi for small instances and greedy method for large-scale instances.
		\\
		\hline
		\cite{LKVR2016a}   & Allow passenger-parcel sharing and parcel-parcel sharing.  & Heuristic & Adaptive large neighborhood search heuristic.  
        \\
        \hline
		\cite{LKVR2016b}   & Consider stochastic travel times and stochastic delivery locations & Heuristic & Adaptive large neighborhood search heuristic.   
        \\
        \hline
		\cite{YPRSJ2018}   & Allow passenger-passenger sharing, passenger-parcel sharing, and parcel-parcel sharing &Heuristic & Simulated annealing heuristic.   
        \\
        \hline
		\cite{BSN2018}   & Allow passenger-passenger sharing, passenger-parcel sharing, and parcel-parcel sharing &Exact (8-32 requests) & GUROBI.   
        \\
        \hline
		\cite{YIRL2021}  & (i) Allow passenger-parcel sharing, and parcel-parcel sharing; (ii) Allow vehicles to adjust their compartment size. & Heuristic & Simulated annealing heuristic with mutation strategy.   
        \\     
        \hline
		\cite{YMGIJ2023}   & -  & Heuristic & Matheuristic algorithm that combines the simulated annealing with mutation strategy and the set partitioning approach.   
        \\
        \hline
		This study  & (i) Allow passenger-parcel sharing and parcel-parcel sharing; (ii) Requires simultaneous consideration of assigning requests to two types of vehicles and ensuring that all parcel requests are completed; (iii) Bi-objective optimization & Exact (100 requests) & Problem decomposition approach $+$ $\varepsilon$-constraint method $+$ Mixed integer linear program (GUROBI).   
        \\
		\hline
		\end{tabular}
		}
		\label{tab_SARP_study}
    \end{table}
    
\section{Problem statement and preliminaries} \label{sec_PS}

\subsection{Relevant notions}

  Table \ref{tab_notations} lists key parameters and variables used in this paper.

    \setlength\LTleft{0pt}
    \setlength\LTright{0pt}
    \begin{longtable}{@{\extracolsep{\fill}}rl}
    \caption{Notations and symbols}
    \label{tab_notations} 
    \\
    \hline
    \multicolumn{2}{l}{Sets}     
        \\
        \hline
        $\mathcal{R}_{P}$  & Set of passenger requests;
        \\
        $\mathcal{R}_{F}$  & Set of parcel (freight) requests;
        \\
        $\mathcal{R}$  & Set of requests, $\mathcal{R} = \mathcal{R}_{P} \cup \mathcal{R}_{F}$;
        \\
        $\mathcal{T}_{P}$  & Set of trips with passenger requests only;
        \\
        $\mathcal{T}_{F}$  & Set of trips with parcel requests only;
        \\
        $\mathcal{T}_{M}$  & Set of trips with mixed passenger and parcel requests;
        \\
        $\mathcal{T}$  & Set of all trips, $\mathcal{T} = \mathcal{T}_{P} \cup \mathcal{T}_{F} \cup \mathcal{T}_{M}$; 
        \\
        $\hat{\mathcal{T}}$ & Set of feasible trips, as defined by \eqref{SARP_2}-\eqref{SARP_18};
        \\
        $V^{p,o}$  & Set of passenger origin stops, $V^{p,o}=\{1, 2, ...,n\}$;
        \\
        $V^{p,d}$   & Set of passenger destination stops, $V^{p,d}=\{1+\delta, 2+\delta, ...,n+\delta\}$;
        \\
        $V^{f,o}$  & Set of parcel origin stops, $V^{f,o}=\{n+1, n+2, ...,n+m\}$;
        \\
        $V^{f,d}$ & Set of parcel destination stops, $V^{f,d}=\{\delta+n+1, \delta+n+2, ...,\delta + n+m\}$;
        \\
        $V$  & Set of all vehicle stops, $V = V^{p,o} \cup V^{p,d} \cup V^{f,o} \cup V^{f,d}$;
        \\
        $\mathcal{K}_{R}$  & Set of RVs, $\mathcal{K}_{R}=\{1,2,...|\mathcal{K}_{R}|\}$;
        \\
        $\mathcal{K}_{L}$ & Set of LVs, $\mathcal{K}_{L}=\{1, 2, ..., |\mathcal{K}_{L}|\}$;
           \\
            \hline
    \multicolumn{2}{l}{Parameters and constants}    
   \\  \hline
            $\phi_{rp}$ & Binary parameter that equals if trip $p$ contains request $r$;
        \\
        $Q_{k}$ & Capacity of vehicle $k$, $k \in \mathcal{K}_{R} \cup \mathcal{K}_{L}$;
        \\
        $q_{i}$  & Vehicle load at stop $i$;  
        \\
        $\text{TD}(i,j)$  & Travel distance between stops $i$ and $j$;
        \\
        $\text{TT}(i,j)$  & Travel time between stops $i$ and $j$;
	\\
        $\sigma$ & Maximum wait time between request submission and the start of service;
        \\
        $\Delta_{P}$ & Maximum delay tolerable for passenger requests;
        \\
        $\Delta_{F}$ & Maximum delay tolerable for parcel requests;
        \\
        $t_{r}^{s}$ & Submission time of the request $r\in\mathcal{R}$;
        \\
        $t_{r}^{*}$ & Earliest possible arrival time at destination for request $r\in\mathcal{R}$;
        \\
        $\eta$  & Maximum number of stops during one passenger service trip;
        \\
        $\alpha$  & Fixed income associated with serving a passenger request $r\in\mathcal{R}_P$;
        \\
        $\beta$  & Fixed income associated with serving a parcel request $r\in\mathcal{R}_F$;
        \\
        $\gamma_{1}$  & Variable income per kilometer for serving a passenger request $r\in\mathcal{R}_P$;
        \\
        $\gamma_{2}$  & Variable income per kilometer for serving a parcel request $r\in\mathcal{R}_F$;
        \\
        $\gamma_{3}$  & Average cost per kilometer for vehicle operations (energy, personnel, etc.);
        \\
        $\gamma_{4}$  & Penalty per minute of travel delay for passenger requests;
        \\
        $\xi_{p}$ &  Profit from serving trip $p$;
        \\
        \hline
        \multicolumn{2}{l}{Auxiliary variables}                
        \\
        \hline
        $\tau_{i}$  & Arrival time at stop $i$;
        \\	
        $w_{i}$  & Vehicle load after visiting stop $i$;
        \\	
        $P_{i}$  & The index of stop $i$ in a visit sequence of the vehicle route;
        \\	 
        \hline
        \multicolumn{2}{l}{Decision variables}                 
        \\
        \hline
        $x_{i,j}$  & Binary variable that equals $1$ if vehicle travels directly from stop $i$ to stop $j$;
        \\
        $y_{p}$ & Binary variable that equals $1$ if trip $p$ is selected.
        \\
        \hline
    \end{longtable}

{\bf Travel distance and time.} Given a road network, the travel distance between origin $o$ and destination $d$ is denoted by $\text{TD}(o,d)$, which is equal to the length of the shortest path. In this study, we assume the vehicle speed is constant, denoted by $v$. Then, we can calculate the travel time between $o$ and $d$, denoted $\text{TT}(o,d)$.

{\bf Requests.} A request $r$ is defined as a tuple $(o_{r}, d_{r}, t_{r}^{s}, t_{r}^{pl}, t_{r}^{p}, t_{r}^{d}, t_{r}^{*})$, including its origin $o_{r}$, destination $d_{r}$, submission time $t_{r}^{s}$, the latest acceptable pick-up time $t_{r}^{pl} = t_{r}^{s} + \sigma$ where $\sigma$ the maximum wait time, the pick-up time $t_{r}^{p}$, the drop-off time $t_{r}^{d}$, and the earliest possible time at which the destination could be reached $t_{r}^{*} = t_{r}^{s} + \text{TT}(o_{r}, d_{r})$. In this study, there are two types of requests: passenger and parcel requests. Both requests may encounter drop-off delays, which is the difference between the actual drop-off time and the earliest possible delivery time $t_{r}^{d} - t_{r}^{*}$. This work sets maximum delays tolerated by passenger requests $\Delta_{P}$ and parcel requests $\Delta_{F}$.

{\bf Trips.} A trip $p = \{r_{1}, ..., r_{|p|}\}$ is a set of requests to be served by the same vehicle. Note that each request $r_i$ can refer to both passenger and parcel.

{\bf Vehicles.} There are two types of vehicles, ride-hailing vehicles (RVs) and logistic vehicles (LVs). The RVs can serve both passenger and parcel requests, separately and simultaneously, while the LVs can only serve parcel requests. Each vehicle $k$ has a capacity $Q_{k}$.

\subsection{Problem description}
    
   \begin{definition}\label{probdef}
   \textbf{The share-a-ride problem with ride-hailing vehicles and logistic vehicles (SARP-RL)} Given a set of passenger requests $\mathcal{R}_{P}=\{r_{1}, ..., r_{n}\}$, a set of parcel requests $\mathcal{R}_{F}=\{r_{n+1}, ..., r_{n+m}\}$, and a set of RVs $\mathcal{K}_{R} = \{1, ..., |\mathcal{K}_{R}|\}$. The SARP-RL aims to determine (1) the minimum LV fleet size, and (2) the optimal assignment of requests to two types of vehicles, to maximize the total RV profits and minimize logistic costs (by minimizing the fleet size), subject to a set of constraints $\mathcal{Z}$ pertinent to the level of service.
    \end{definition}

    In particular, this study considers the following set of constraints $\mathcal{Z}$:

    \begin{enumerate}
        \item By coordinating RVs and LVs, all parcel requests must be completed in full.
        \item RVs can serve both passenger and parcel requests, while LVs can only serve parcel requests.
        \item We allow passenger-parcel and parcel-parcel sharing, not passenger-passenger sharing to maintain the service level. 
        \item When a request $r \in \mathcal{R}_{P} \cup \mathcal{R}_{F}$ is submitted, its wait time is bounded by $\sigma$: $t_r^p\in[t_{r}^{s},\, t_{r}^{s} + \sigma]$.
        \item For each passenger request $r \in \mathcal{R}_{P}$, its maximum travel delay $t_{r}^{d} - t_{r}^{*} \leq \Delta_{P}$; For each parcel request $r \in \mathcal{R}_{F}$, its maximum travel delay $t_{r}^{d} - t_{r}^{*} \leq \Delta_{F}$.
        \item For each vehicle $k$, the load must not exceed the capacity $Q_{k}$ at any time.
        \item For each passenger request $r \in \mathcal{R}_{P}$, the maximum number of stops tolerable during one passenger service trip is $\eta$.
    \end{enumerate}

\subsection{The drivers' profits}
    This study adopts a similar cost structure as in  \cite{LKRV2014} and \cite{YMGIJ2023}. For RVs, their income associated with passenger request $r$ is
$$
\alpha + \gamma_1 \text{TD}(o_r,d_r) -\gamma_4 (t_r^d-t_r^*)
$$
\noindent where $\alpha$ is the fixed income, and $\gamma_1\text{TD}(o_r,d_r)$ is the variable income proportional to the travel distance, $\gamma_4(t_r^d-t_r^*)$ is the penalty associated with travel delays (note that such delays must be bounded by $\Delta_P$). Similarly, the income associated with parcel request $r$ reads
$$
\beta+ \gamma_2\text{TD}(o_r, d_r)
$$
\noindent where no penalties are imposed for delayed drop-offs (note that such delays must be bounded by $\Delta_F$). Then, for an RV serving a trip $T$, its total profit is expressed as
\begin{equation}\label{profV}
\Phi_T\doteq \sum_{r\in T\cap \mathcal{R}_P}\alpha + \gamma_1 \text{TD}(o_r,d_r) -\gamma_4 (t_r^d-t_r^*) + \sum_{r\in T\cap\mathcal{R}_F} \beta+ \gamma_2\text{TD}(o_r, d_r) - \gamma_3 L_T
\end{equation}
 \noindent where $L_T$ is the total distance traveled to complete the trip $T$, and $\gamma_3$ is the cost per unit distance. Note that \eqref{profV} equally applies to LVs, because in this case $T\cap \mathcal{R}_p=\emptyset$.
 
\section{Model development and solution algorithm} \label{sec_SF}
    In this section, we mathematically articulate the SARP-RL problem from Definition \ref{probdef}, while developing a solution framework based on a decomposition scheme.

\subsection{Overview}
   Given a set of passenger requests $\mathcal{R}_P$, a set of parcel requests $\mathcal{R}_F$ and a set of RVs $\mathcal{K}_R$. The exact solution framework of SARP-RL computes all Pareto-optimal solutions and consists of the following steps, shown in Figure \ref{fig_overview}.

\begin{itemize}
\item[(1)] \textbf{Optimal trip for given requests}: This step checks whether a set of requests can form a trip, constrained by $\mathcal{Z}$. If so, it outputs the feasible route of the trip in the network to serve these requests that maximizes the profit. The process of feasibility check or route optimization is formulated as a mixed integer program (MILP).

\item[(2)] \textbf{Generate all trips}: This step generates all feasible trips using an efficient enumeration method, which drastically reduces the required instances of feasibility check or route optimization (MILP) in Step (1). In addition, an empirical analysis of the enumeration results suggest that the size of those MILPs are rather small, meaning the overall computational burden is negligible.

\item[(3)] \textbf{Trip-vehicle assignment}: Building on Steps (1) and (2), the SARP-RL is converted to a trip-vehicle assignment problem. This step introduces an $\varepsilon$-constraint method framework to maximize RV driver profits and minimize the LV fleet size, or to find all Pareto-optimal solutions.

\end{itemize}
    \begin{figure}[H]
       \centering
       \includegraphics[width=0.8\textwidth]{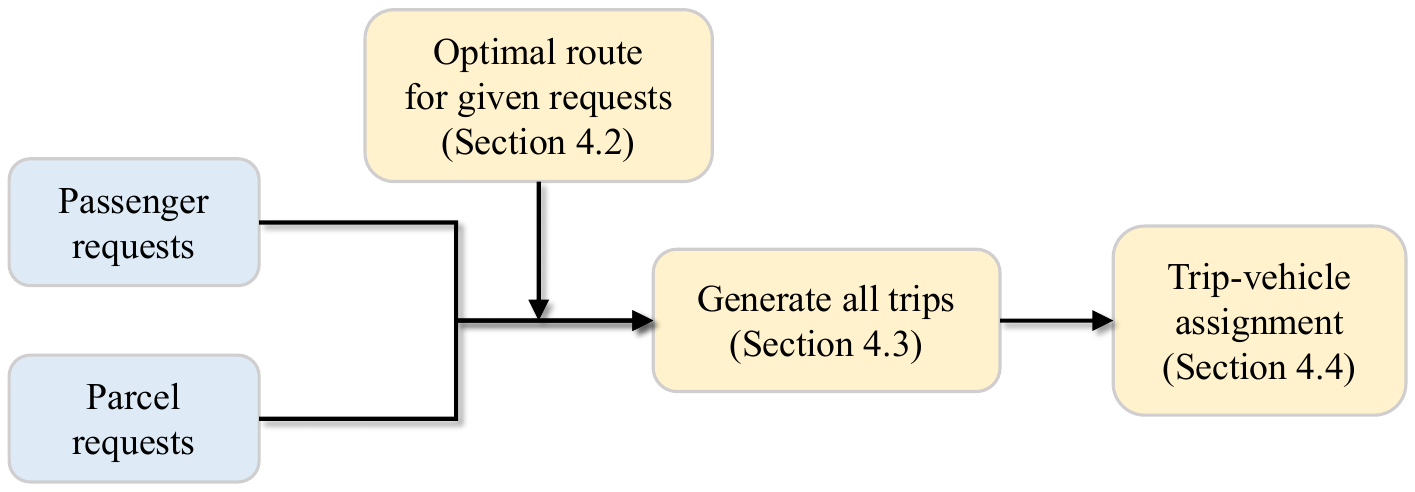}
       \caption{Solution framework for the proposed SARP-RL.}
       \label{fig_overview}
    \end{figure}

\subsection{Optimal route for given requests}
    
    Given a subset of requests $\mathcal{R}^{*} \subseteq \mathcal{R}$, where $\mathcal{R}^{*} = \mathcal{R}_{P}^{*} \cup \mathcal{R}_{F}^{*}$, corresponding to subsets of passenger and parcel requests, respectively. This section aims to check whether $\mathcal{R}^{*}$ can be served by the same vehicle and, if so, obtain the optimal route that maximizes the profit.

    Let $\delta$ be the total number of requests in $\mathcal{R}^{*}$, including $n$ passenger requests and $m$ parcel requests. Let $V^{p,o}=\{1,2, ..., n\}$ be set of passenger origins, $V^{p,d} = \{1+\delta, 2+\delta, ..., n+\delta\}$ be set of passenger destinations, $V^{f,o} = \{n+1, n+2, ..., n+m\}$ be set of parcel origins, and $V^{f,d} = \{n+1+\delta, n+2+\delta, ..., n+m+\delta\}$ be set of parcel destinations. Let $V = V^{p,o} \cup V^{p,d} \cup V^{f,o} \cup V^{f,d}$ be all the vehicle stops. We introduce $0$ and $2 \delta + 1$ to represent the virtual origin and destination depots. Each stop $i \in V$ is associated with a load $q_{i}$ ($q_{0} = q_{2\delta + 1} = 0$), and $\forall i \in  V^{p,o} \cup V^{f,o}$, $q_{i} = -q_{i + \delta}$. Let $x_{i,j} = 1$, if the vehicle travels directly from stop $i$ to stop $j$. For each stop $i\in V$, let $\tau_{i}$ be the time of arrival at stop $i$, and $w_{i}$ be the load of the vehicle after visiting stop $i$. Finally, we use variables $P_{i} \in \{1, 2, ..., 2(\delta +1)\}$ to define the index of stop $i$ in a visit sequence of the vehicle route, and maximum number of stops that can be accepted between one passenger request is defined by $\eta$. The problem of finding the optimal route for given requests $R^{*}$ is formulated as the following mixed integer linear program:
    
     \begin{multline}
        \label{SARP_1}
        \max_{x_{i,j}} ~ \left[\sum_{i \in V^{p,o}}(\alpha + \gamma_{1}\text{TD}(i, i+\delta))
        - \gamma_{4}\sum_{i \in V^{p,o}}(\tau_{\delta+i} - t_{i}^{*})\right] 
        \\
        + \sum_{i \in V^{c,o}}\left[\beta + \gamma_{2}\text{TD}(i, i+\delta)\right]
        - \gamma_{3}\sum_{i \in V}\sum_{j \in V}\text{TD}(i,j)x_{i,j}
    \end{multline}
    
    \begin{eqnarray}
        \label{SARP_2}
        \sum_{i \in V}x_{0,i} = 1 
        \\
        \label{SARP_3}
        \sum_{i \in V}x_{i,2\delta + 1} = 1 
        \\
        \label{SARP_4}
        \sum_{j \in V \cup \{2\delta + 1\}}x_{i,j} = \sum_{j \in V \cup \{0\}}x_{j,i}  & & \forall i \in V
        \\
        \label{SARP_5}
        \sum_{j \in V \cup \{2\delta + 1\}}x_{i,j} = 1 & & \forall i \in V
        \\
        \label{SARP_6}
        \tau_{j} - \tau_{i} \geq \text{TT}(i,j) + M_1(x_{i,j}-1)  & & \forall i,j \in V \cup \{0, 2\delta + 1\}
        \\
        \label{SARP_7}
        t_{i}^{s} \leq \tau_{i} \leq t_{i}^{s} + \sigma  & & \forall i \in V^{p,o} \cup V^{f,o}
        \\
        \label{SARP_8}
        \tau_{\delta + i} - t_{i}^{*} \leq \Delta_{P}  & & \forall i \in V^{p,o}
        \\
        \label{SARP_9}
        \tau_{\delta + i} - t_{i}^{*} \leq \Delta_{F}  & & \forall i \in V^{f,o}
        \\
        \label{SARP_10}
        \tau_{i} \leq \tau_{\delta + i}  & & \forall i \in V^{p,o} \cup V^{f,o}
        \\
        \label{SARP_11}
        w_{j} - w_{i}  \geq q_{j} + M_2(x_{i,j}-1)  & & \forall i,j \in V \cup \{0, 2\delta + 1\}
        \\
        \label{SARP_12}
        \max \{0, q_{i}\} \leq w_{i} \leq \min \{Q_{k}, Q_{k} + q_{i}\}  & & \forall i \in V \cup \{0, 2\delta + 1\}
        \\
        \label{SARP_13}
        P_{i} + 1 - P_{j} \geq M_3(x_{i,j} - 1)  & & \forall i,j \in V
        \\
        \label{SARP_14}
        P_{i} + 1 - P_{j} \leq M_3(1- x_{i,j})  & & \forall i,j \in V
        \\ 
        \label{SARP_15}
        P_{j+\delta} - P_{j} - 1 \leq \eta  & & \forall j \in V^{p,o}
        \\
        \label{SARP_16}
        0 \leq P_{i} \leq 2\delta + 1  & & \forall i \in V
        \\
        \label{SARP_17}
        x_{i,j} \in \{0,1\}  & & \forall i,j \in V \cup \{0, 2\delta+1\}
        \\
        \label{SARP_18}
        \tau_{i}, w_{i}, P_{i} \in \mathbb{R}_{+}  & & \forall i \in V \cup \{0, 2\delta+1\}
    \end{eqnarray}
    
    The objective \eqref{SARP_1} is to maximize the driver profits. Constraints \eqref{SARP_2} and \eqref{SARP_3} guarantee that the route starts at the origin depot and ends at the destination depot. Constraint \eqref{SARP_4} ensures that every stop except the origin and the destination must have one preceding and one succeeding stop. Constraint \eqref{SARP_5} ensures that each request must be served. Constraint \eqref{SARP_6} computes the visit time of stop $i$. $M_1$ is a sufficiently large number. The time window of origin stops constraints are defined in \eqref{SARP_7}. Constraint \eqref{SARP_8} and \eqref{SARP_9} ensure that passenger and parcel requests cannot exceed the maximum travel delay tolerated. Constraint \eqref{SARP_10} ensures that for each request, the pick-up time must be earlier than the drop-off time. Constraint \eqref{SARP_11} computes the loads of the vehicle after visiting stop $i$. $M_2$ is a sufficiently large number. Since it is not allowed two passenger requests served by the same vehicle, the parameter value is necessary to meet $Q_{k} < 2q_{i}$ $(\forall i \in V_{p,o})$. Vehicle capacities are defined using constraint \eqref{SARP_12}. constraints \eqref{SARP_13} and \eqref{SARP_14} define the service sequence of the stops. $M_3$ is a sufficiently large number. Constraint \eqref{SARP_15} guarantees that passenger service has a higher priority: we can insert at most $\eta$ stops between the pickup and drop-off point of a passenger request. Constraints \eqref{SARP_16} - \eqref{SARP_18} are the decision variable constraints.

Note that the constraints \eqref{SARP_2}-\eqref{SARP_18} may not admit feasible solutions and, in that case, the given requests $\mathcal{R}^*$ cannot form a trip. Otherwise, this subproblem outputs the optimal route in the network that maximizes the profit while serving all trips in $\mathcal{R}^*$.

\subsection{Generate all feasible trips} \label{sec_GT}
    A trip is defined as a subset of requests $\mathcal{R}^{*}\subset\mathcal{R}$ that can be served by the same vehicle, whose feasibility is checked by \eqref{SARP_2}-\eqref{SARP_18}. A single request may be included in several feasible trips of varying sizes. Our strategy for generating all feasible trips is to progressively augment existing trips, subject to the feasibility check provided by \eqref{SARP_2}-\eqref{SARP_18}. Such a process is facilitated by Lemma \ref{Lemma_1}, which considerably reduces the search space. 
    
    \begin{lemma}
       If a subset of requests $\mathcal{R}^*=\{r_1, ..., r_n\}$ cannot form a trip, then neither can $\mathcal{R}^*\cup\{r_{n+1}\}$ for any $r_{n+1}\in \mathcal{R}\setminus\mathcal{R}^*$.
        \label{Lemma_1}
    \end{lemma}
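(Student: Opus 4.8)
The plan is to establish the contrapositive: I will show that if $\mathcal{R}^*\cup\{r_{n+1}\}$ \emph{can} form a trip, then so can $\mathcal{R}^*$. This reduces the lemma to a monotonicity statement — feasibility of the routing problem \eqref{SARP_2}--\eqref{SARP_18} is preserved under deletion of a request. Concretely, I would start from any feasible solution $(x,\tau,w,P)$ of \eqref{SARP_2}--\eqref{SARP_18} for the enlarged request set $\mathcal{R}^*\cup\{r_{n+1}\}$, and construct a feasible solution for $\mathcal{R}^*$ by deleting the two stops $o_{n+1}$ and $d_{n+1}$ associated with $r_{n+1}$ from the visited sequence and short-circuiting the route, i.e.\ reconnecting the predecessor of each deleted stop directly to its successor. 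Note that the profit objective \eqref{SARP_1} plays no role here: ``forming a trip'' is purely a question of nonemptiness of the feasible region, so it suffices to exhibit one feasible route.

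The heart of the verification is the timing. I would delete the two stops one at a time: removing a single node $u$ from a segment $\cdots\to p\to u\to s\to\cdots$ and reconnecting $p\to s$, I keep the arrival times $\tau_i$ of all surviving stops \emph{unchanged}. The only genuinely new constraint to check is the travel-time constraint \eqref{SARP_6} on the new edge $(p,s)$, and this is where the triangle inequality of $\text{TT}$ is essential: since $\text{TT}(i,j)=\text{TD}(i,j)/v$ with $\text{TD}$ a shortest-path distance, $\text{TT}$ is subadditive, so $\text{TT}(p,s)\le \text{TT}(p,u)+\text{TT}(u,s)\le(\tau_u-\tau_p)+(\tau_s-\tau_u)=\tau_s-\tau_p$, which is exactly \eqref{SARP_6} for the short-circuited edge. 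The remaining time-based constraints \eqref{SARP_7}--\eqref{SARP_10} involve only surviving stops evaluated at their unchanged $\tau_i$, hence are automatically inherited; in particular there is no risk of arriving ``too early'' and violating the lower window in \eqref{SARP_7}, because \eqref{SARP_6} is a lower bound that implicitly permits waiting, so retaining the original $\tau_i$ is always admissible.

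For the load and sequence variables I would recompute rather than inherit. Recomputing $w_i$ along the shortened route amounts to dropping a pickup-delivery pair with $q_{o_{n+1}}=-q_{d_{n+1}}>0$; consequently every surviving load weakly decreases, so the capacity bounds \eqref{SARP_12} and the load recursion \eqref{SARP_11} hold automatically (the lower bound $\max\{0,q_i\}\le w_i$ survives because any item picked up at a surviving stop is still on board). Reindexing the position variables $P_i$ consecutively along the new route, deletion can only reduce the number of intermediate stops strictly between any passenger's origin and destination, so the stop-count cap \eqref{SARP_15} is preserved (and likewise \eqref{SARP_13}--\eqref{SARP_14}, \eqref{SARP_16}). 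The flow constraints \eqref{SARP_2}--\eqref{SARP_5} hold for the reconnected tour by construction.

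I expect the main obstacle — and the step deserving the most care — to be the travel-time argument for \eqref{SARP_6} after short-circuiting, precisely because it is the one place where a structural property of the data (subadditivity of $\text{TT}$, inherited from shortest paths at constant speed $v$) is indispensable; without it, deleting an intermediate stop could in principle lengthen a leg and break feasibility. The load recomputation is routine once one argues monotonicity, and the sequence constraints are immediate. Assembling these observations shows the constructed $(x',\tau',w',P')$ is feasible for $\mathcal{R}^*$, establishing the contrapositive and hence the lemma.
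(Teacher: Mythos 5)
Your proposal is correct, and it is considerably more substantive than the paper's own proof. The paper disposes of Lemma \ref{Lemma_1} in a single sentence: if $\mathcal{R}^*\cup\{r_{n+1}\}$ can form a trip, then ``$\mathcal{R}^*$ itself must be a feasible trip, leading to contradiction'' --- that is, it states the contrapositive and treats downward-closedness of trip feasibility as self-evident, giving no argument for why deleting a request preserves feasibility. You share the same contrapositive skeleton but actually prove the monotonicity claim: starting from a feasible $(x,\tau,w,P)$ for the enlarged set, you delete the two stops of $r_{n+1}$, short-circuit the route, keep $\tau$ fixed, and recompute $w$ and $P$. Crucially, you identify the one place where the claim is not purely formal: the short-circuited edge satisfies \eqref{SARP_6} only because $\text{TT}$ inherits the triangle inequality from shortest-path distances at constant speed $v$ (combined with the fact that \eqref{SARP_6} is an inequality, so retaining the original arrival times amounts to permissible waiting). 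With an arbitrary non-metric travel-time matrix, deleting an intermediate stop could force $\tau_s$ later and cascade into violations of the delay bounds \eqref{SARP_8}--\eqref{SARP_9}, so the lemma could genuinely fail --- a dependence the paper's one-liner silently assumes away. Your handling of the remaining constraints is sound: loads weakly decrease after dropping a pickup--delivery pair, so \eqref{SARP_11}--\eqref{SARP_12} survive (the lower bound in \eqref{SARP_12} because recomputed loads are sums of on-board items), and consecutive reindexing of $P$ preserves \eqref{SARP_13}--\eqref{SARP_14} while deletion can only help the stop-count cap \eqref{SARP_15}. The only omissions are bookkeeping: the MILP for $\mathcal{R}^*$ relabels stops since $\delta$ decreases by one, and the big-$M$ constraints involving deleted stops simply disappear --- both immaterial. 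In short, the paper buys brevity by assertion; you buy rigor by exhibiting the explicit construction and the structural hypothesis on which the lemma actually rests.
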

    \begin{proof}
	If $\exists r_{n+1}\in \mathcal{R}\setminus\mathcal{R}^*$ such that $\mathcal{R}^*\cup\{r_{n+1}\}$ can form a trip, then $\mathcal{R}^*$ itself must be a feasible trip, leading to contradiction.
    \end{proof}

Inspired by \cite{ASWFR2017}, we begin with trips that consist of a single request and progressively augment those trips until they become infeasible. Let $\mathcal{T}(l)$ be the set of all feasible trips consisting of $l$ requests, this preliminary process is summarized in Algorithm \ref{alg_GAT}.

 \begin{algorithm}[h!]
         \caption{Generate all trips \citep{ASWFR2017}}
         \begin{tabbing}
         \hspace{0.01 in}\=  \hspace{0.9 in}\= \kill 
         \>{\bf Input}  \> Ordered set of all requests $\mathcal{R}=\{r_1,\ldots,r_{|\mathcal{R}|}\}$;
         \\
         \> {\bf Initialize} \> Set of singleton trips $\mathcal{T}(1)\doteq \{p=\{r_i\}:~1\leq i\leq |\mathcal{R}| \}$; $l = 1$.
        \\
         \>  {\bf Step 1} \>  Generate trip set $\mathcal{T}(l + 1)$ by checking the feasibility of $p\cup \{r_j\}, \forall p\in\mathcal{T}(l)$,
         \\
         \> \> $\forall r_j \notin p$;
         \\
         \>  {\bf Step 2} \>  If $\mathcal{T}(l + 1) \neq \emptyset$,
         let $l=l+1$, and go to Step 1; otherwise, terminate;
         \\
        \>{\bf Output}      \> Set of all trips $\mathcal{T}(1),\, \mathcal{T}(2),\,\ldots, \mathcal{T}(l)$.
        \end{tabbing}
         \label{alg_GAT}
    \end{algorithm}

A potentially time-consuming part in Algorithm \ref{alg_GAT} is Step 1, which needs to enumerate all possible candidate trips and check their feasibility by solving problem \eqref{SARP_1}-\eqref{SARP_18}. To further reduce the complexity of this procedure, we note that only a partial enumeration would suffice, as asserted in Lemma \ref{Lemma_2} and executed in Algorithm \ref{alg_GNT}.

    \begin{lemma}\label{Lemma_2}
	Let $\mathcal{R}=\{r_1, \ldots, r_{|\mathcal{R}|}\}$ be an ordered set of all the requests. Let $\mathcal{T}(l)$ be the set of feasible trips consisting of $l$ requests. For each trip $p\in\mathcal{T}(l)$, let 
$$
j^*_p=\underset{1\leq j\leq |\mathcal{R}|}{\text{argmax}}\,\{r_j\in p\}$$ 
\noindent be the highest request index in $p$. Then, 
\begin{equation}\label{lemmaeqn1}
\mathcal{T}(l+1)=\bigcup_{p\in\mathcal{T}(l)} \big\{p\cup r_{j}:~j_p^*+1\leq j \leq |\mathcal{R}| \big\}\cap \hat{\mathcal{T}}
\end{equation}
    \end{lemma}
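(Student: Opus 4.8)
The plan is to establish the set equality \eqref{lemmaeqn1} by proving the two inclusions separately, leaning on Lemma \ref{Lemma_1} as the essential tool for the harder direction. Throughout, I would exploit that distinct requests in the ordered set $\mathcal{R}$ carry distinct indices, so that for any trip $p$ the maximizer $j_p^*$ is unique and well defined, and that $\hat{\mathcal{T}}$ is exactly the set of trips certified feasible by \eqref{SARP_2}--\eqref{SARP_18}.

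For the inclusion $\supseteq$, I would take any candidate of the form $q = p \cup \{r_j\}$ with $p \in \mathcal{T}(l)$, $j_p^* + 1 \leq j \leq |\mathcal{R}|$, and $q \in \hat{\mathcal{T}}$. Since $j > j_p^*$ strictly exceeds every index occurring in $p$, the request $r_j$ is not already contained in $p$, so $q$ consists of exactly $l+1$ distinct requests. Because $q \in \hat{\mathcal{T}}$ certifies feasibility, $q$ is a feasible trip of $l+1$ requests, i.e. $q \in \mathcal{T}(l+1)$. This direction is routine and needs no appeal to Lemma \ref{Lemma_1}.

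For the reverse inclusion $\subseteq$, which is the crux, I would start from an arbitrary $q \in \mathcal{T}(l+1)$, let $j^* = j_q^*$ be its highest request index, and set $p \doteq q \setminus \{r_{j^*}\}$, a set of $l$ requests. Two facts must then be checked. First, $p \in \mathcal{T}(l)$: here I invoke the contrapositive of Lemma \ref{Lemma_1}, namely that if $p \cup \{r_{j^*}\} = q$ forms a feasible trip then so does $p$; hence $p$ is a feasible trip of size $l$. Second, $j^* \geq j_p^* + 1$: because $r_{j^*}$ is the unique highest-index element of $q$ and has been deleted to form $p$, every index remaining in $p$ lies strictly below $j^*$, so $j_p^* < j^*$. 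Combining, $q = p \cup \{r_{j^*}\}$ with $p \in \mathcal{T}(l)$, $j^* \in \{j_p^*+1, \ldots, |\mathcal{R}|\}$, and $q \in \hat{\mathcal{T}}$ since $q$ is feasible; therefore $q$ belongs to the right-hand side of \eqref{lemmaeqn1}.

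The only delicate point—and the place where a careless statement could fail—is the reverse inclusion's reliance on Lemma \ref{Lemma_1} to guarantee that deleting the top-index request from a feasible $(l+1)$-trip yields a genuinely feasible $l$-trip that resides in $\mathcal{T}(l)$; without this monotonicity the generating trip $p$ might not have been enumerated at stage $l$. I expect no computation beyond this, and would close by observing that the same construction shows each $q \in \mathcal{T}(l+1)$ arises from exactly one pair $(p, r_j)$, namely $p = q \setminus \{r_{j_q^*}\}$ with $j = j_q^*$, since the constraint $j > j_p^*$ forces the appended request to be the maximal-index element of $q$. This uniqueness is precisely what lets the partial enumeration of Algorithm \ref{alg_GNT} avoid the $(l+1)$-fold redundancy incurred by the naive enumeration in Algorithm \ref{alg_GAT}.
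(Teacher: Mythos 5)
Your proof is correct and takes essentially the same approach as the paper: both arguments hinge on the contrapositive of Lemma \ref{Lemma_1} to show that deleting the highest-index request from a feasible $(l+1)$-trip leaves a feasible trip in $\mathcal{T}(l)$, so every element of $\mathcal{T}(l+1)$ is generated by appending its maximal-index request. The only presentational difference is that the paper factors the argument through the exhaustive-enumeration identity and shows that candidates $p\cup r_j$ with $j<j_p^*$ are redundant (rewriting them as $q\cup r_{j_p^*}$ with $q=p\cup r_j\setminus r_{j_p^*}\in\mathcal{T}(l)$), whereas you prove the two inclusions directly; your closing uniqueness observation, while absent from the paper, is also correct.
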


\begin{proof}
We denote the set on the RHS of \eqref{lemmaeqn1} to be $A$. Clearly, $\mathcal{T}(l+1)$ can be expressed via exhaustive enumeration:
$$
\mathcal{T}(l+1)=\bigcup_{p\in\mathcal{T}(l)} \big\{p\cup r_{j}:~1\leq j \leq |\mathcal{R}|,  r_{j} \notin p \big\}\cap \hat{\mathcal{T}}
$$
Therefore, it suffices to show that $\forall p\in\mathcal{T}(l)$, and $\forall j<j_p^*$, $p\cup r_j \cap \hat{\mathcal{T}}\in A$.  Indeed, if $p\cup r_j$ is feasible, then $q\doteq p\cup r_j\setminus r_{j_p^*}$ is also feasible per Lemma \ref{Lemma_1}, which means $q\in \mathcal{T}(l)$. Then, $p\cup r_j= q\cup r_{j_p^*}$ for some $q\in\mathcal{T}(l)$ (and $j_q^*+1\leq j_p^*$), which means $p\cup r_j \cap\hat{\mathcal{T}}\in A$. Otherwise, if $p\cup r_j$ is infeasible, then $p\cup r_j \cap\hat{\mathcal{T}}=\emptyset \in A$. This finishes the proof.
\end{proof}

    \begin{algorithm}[h!]
         \caption{Generate $\mathcal{T}(l+1)$ based on $\mathcal{T}(l)$}
         \begin{tabbing}
         \hspace{0.01 in}\=  \hspace{0.9 in}\= \kill 
         \>{\bf Input}  \> Trip set $\mathcal{T}(l)$; ordered set of all requests $\mathcal{R}=\{r_1, \ldots, r_{|\mathcal{R}|}\}$.
         \\
         \> {\bf Initialize} \> $\mathcal{T}(l+1) = \emptyset$.
        \\
         \>  {1:} \>  For each trip $p \in \mathcal{T}(l)$:
         \\
         \> {2:}\> \hspace{0.2 in} Find the highest request index $j^*_p$ in $p$;
         \\
         \> {3:}\>  \hspace{0.2 in} For request $j \in \{j^*_p + 1, \ldots, |\mathcal{R}|\}$:
         \\
        \> {4:}\>   \hspace{0.4 in} Apply \eqref{SARP_2}-\eqref{SARP_18} to check whether $p\cup\{r_j\}$ can form a feasible trip;
         \\
        \> {5:}\>   \hspace{0.4 in} If $p\cup\{r_j\}$ is feasible, add it to $\mathcal{T}(l+1)$;
         \\
         \> {6:}\>  \hspace{0.2 in} End For
         \\
         \> {7:}\> End For
         \\
        \>{\bf Output}      \> Trip set $\mathcal{T}(l+1)$.
        \end{tabbing}
         \label{alg_GNT}
    \end{algorithm}

By replacing Step 1 in Algorithm \ref{alg_GAT} with Algorithm \ref{alg_GNT}, one could considerably reduce the number of candidate trips for the feasibility check. To see this, we use several datasets from the numerical case study and evaluate the number of candidate trips to be checked for feasibility, as shown in Table \ref{tab_trip_combinations}. Compared to direct enumeration, the technique introduced by \cite{ASWFR2017} (Algorithm \ref{alg_GAT}) can considerably reduce the number of candidate trips. Moreover, the method proposed in Algorithm \ref{alg_GNT} can further reduce such number by a factor of 7 to 11.

In addition, Figure \ref{fig_trips} shows the number of candidate trips as $l$ varies. Most trips in these datasets contain between 3 to 6 requests, with a maximum of 9 requests. Therefore, the computational burden of \eqref{SARP_1}-\eqref{SARP_18} is insignificant and can be quickly solved using off-the-shelf solvers. In fact, as the last column of Table \ref{tab_trip_combinations} shows, the average CPU time for solving such a problem is $3\times 10^{-3}$ s.

	\begin{table}[h!]
	\centering
	\caption{Number of trip evaluations required by different enumeration methods. 8 request datasets are randomly generated from the Manhattan case study. The last column is the total CPU time for checking all feasible trips based on Algorithm \ref{alg_GNT}.}
	\label{tab_trip_combinations}
	\resizebox{\columnwidth}{!}{
	\begin{tabular}{|c|c|c|cc|c|c|}
	\hline
	\multirow{2}{*}{\begin{tabular}[c]{@{}c@{}}Seed \\ \#\end{tabular}} & \multirow{2}{*}{\begin{tabular}[c]{@{}c@{}}Num. of \\ requests\end{tabular}}  & \multirow{2}{*}{\begin{tabular}[c]{@{}c@{}}Max \\ value of $l$\end{tabular}} & \multicolumn{4}{c|}{Number of candidate trips to be evaluated for feasibility} \\ \cline{4-7} 
	  &     &          & \multicolumn{1}{c|}{Direct enumeration}  &Alg. \ref{alg_GAT}     & With Alg. \ref{alg_GNT} & CPU time (s)\\ \hline
	0 & 100 &  9 & \multicolumn{1}{c|}{$\sum_{i=1}^{10} {100\choose i}\approx1.94\times 10^{13}$} &6,942,396 & 651,399   &  1954 \\ \hline
	1 & 100 &  8 & \multicolumn{1}{c|}{$\sum_{i=1}^{9} {100\choose i}\approx 2.11\times 10^{12}$} & 8,460,320  & 945,330   &  2836 \\ \hline
	2 & 100 &  7 & \multicolumn{1}{c|}{$\sum_{i=1}^{8} {100\choose i}\approx 2.03\times 10^{11}$} &  5,114,247  & 549,368  &  1648  \\ \hline
	3 & 100 &  8 & \multicolumn{1}{c|}{$\sum_{i=1}^{9} {100\choose i}\approx 2.11\times 10^{12}$} & 6,973,555 & 885,274   &  2656 \\ \hline
	4 & 100 &  9 & \multicolumn{1}{c|}{$\sum_{i=1}^{10} {100\choose i}\approx 1.94\times 10^{13}$}&  7,014,696 & 928,855   &  2787 \\ \hline
	5 & 100 &  8 & \multicolumn{1}{c|}{$\sum_{i=1}^{9} {100\choose i}\approx 2.11\times 10^{12}$} & 7,034,300 & 965,966  &  2898  \\ \hline
	6 & 100 &  8 & \multicolumn{1}{c|}{$\sum_{i=1}^{9} {100\choose i}\approx 2.11\times 10^{12}$} & 5,006,316 & 509,737   & 1529  \\ \hline
	7 & 100 &  8 & \multicolumn{1}{c|}{$\sum_{i=1}^{9} {100\choose i}\approx 2.11\times 10^{12}$} & 6,935,121 & 783,873   &  2352 \\ \hline
	\end{tabular}
	}
	\end{table}

    \begin{figure}[H]
       \centering
       \includegraphics[width=0.7\textwidth]{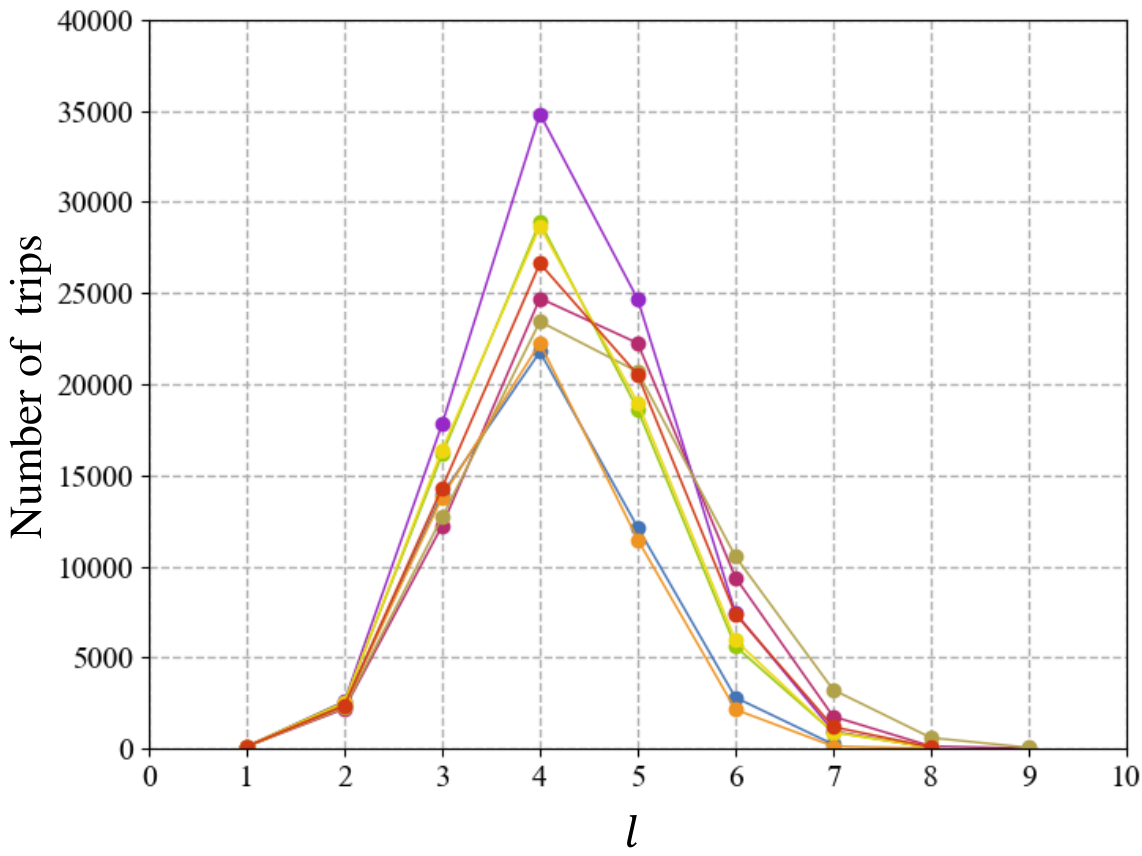}
       \caption{Number of trips of 8 random datasets under various request quantities $l$.}
       \label{fig_trips}
   \end{figure}

\subsection{Trip-vehicle assignment}
	Based on Section \ref{sec_GT}, we have generated all the feasible trips, each representing a tour for a vehicle, thereby converting the SARP-RL into a trip-vehicle assignment problem with much lower complexity. Regarding the optimization objective, both RV drivers' profits and LVs' fleet size (representing logistic cost) are within the purview of such an assignment problem. In this section, we introduce an $\varepsilon$-constraint solution framework to investigate the trade-off between these two objectives. Specifically, we set the  LVs' fleet size as a new constraint and transform the bi-objective problem into a single-objective optimization problem. The procedure is as follows. 
\begin{enumerate}
\item First, the initial value of $\varepsilon$ is set to the minimum fleet size $N_{L}^{\text{min}}$ required to complete all parcel requests without any RVs, which is obtained by solving the sub-problem in Section \ref{subsecMinLVsize}. 

\item Then, a sub-routing is employed to fully utilize all the LVs with a fleet size $|\mathcal{K}_{L}|=\varepsilon$, by solving the sub-problem in Section \ref{subsecMaxLVprofit}. 

\item Given that all parcel requests must be served, maximize the profit of RV drivers $\Phi_{RV}^{\varepsilon}$ by solving the sub-problem in Section \ref{subsecMRVprofit}. A tuple $(\varepsilon,\, \Phi_{RV}^{\varepsilon})$ is obtained.

\item Repeat Steps 2-3 above with smaller $\varepsilon$ to get $(\varepsilon,\, \Phi_{RV}^{\varepsilon})$, until the problem in Step 3 becomes infeasible (the parcel requests cannot be fully served if the LVs' fleet size is insufficient).

\end{enumerate}
	
The overall solution procedure is outlined in Algorithm \ref{alg_SF}, with individual sub-routines elaborated in Sections \ref{subsecMinLVsize}-\ref{subsecMRVprofit}.

    \begin{algorithm}[H]
         \caption{($\varepsilon$-constraint method)}
         \begin{tabbing}
         \hspace{0.01 in}\=  \hspace{0.9 in}\= \kill 
         \>{\bf Input}  \> Passenger-only trips $\mathcal{T}_{P}$; parcel-only trips $\mathcal{T}_{F}$; passenger-parcel-mixed trips $\mathcal{T}_{M}$; 
         \\
         \> \> RV fleet size $|\mathcal{K}_{R}|$.
         \\
         \> {\bf Initialize} \> Solution set $S=\emptyset$; Set the candidate trip set for RVs $\mathcal{T}_{RV} = \emptyset$;
        \\
         \>   \>  Solve problem \eqref{Min_DV_FS_1}-\eqref{Min_DV_FS_3} to obtain the minimum LV fleet size $N_{L}^{\text{min}}$; 
         \\
         \>   \>  Set $\varepsilon =N_{L}^{\text{min}}$;
         \\
         \>  {\bf Step 1} \> Solve problem \eqref{Max_DV_Profit_1}-\eqref{Max_DV_Profit_4} with LVs' fleet size $|\mathcal{K}_{L}|=\varepsilon$;  
         \\
         \> \> The set of parcel requests served by the LVs is denoted $\mathcal{R}_{F}^{'}$;
         \\
         \>  {\bf Step 2} \> Update the candidate trips for RVs: $\mathcal{T}_{RV}=\mathcal{T}_{P} \cup \big\{p\in \mathcal{T}_F\cup\mathcal{T}_M:~p\cap\mathcal{R}_F^{'}=\emptyset \big\}$;
         \\
         \>  {\bf Step 3} \> Solve problem \eqref{Max_RV_Profit_1}-\eqref{Max_RV_Profit_5}  based on candidate trip set $\mathcal{T}_{RV}$. If the problem is 
         \\
         \> \> feasible with optimal objective value $\Phi_{RV}^{\varepsilon}$, add $(\varepsilon, \Phi_{RV}^{\varepsilon})$ to $S$, let $\varepsilon = \varepsilon - 1$ 
         \\
         \> \> and go to \textbf{Step 1}. If the problem is infeasible, terminate the algorithm;
         \\
         \> {\bf Step 4} \> Find all non-dominated solutions in $S$;
         \\
        \>{\bf Output}      \> All Pareto-optimal solutions.
        \end{tabbing}
         \label{alg_SF}
    \end{algorithm}

\subsubsection{Minimum LV fleet size for parcel requests}\label{subsecMinLVsize}

    This section addresses the minimum fleet size $N_{L}^{\text{min}}$ required to complete all parcel requests by an LV fleet. Let $\mathcal{T}_{F}$ be the set of trips with parcel requests only. The binary parameter $\phi_{rp}=1$ if trip $p$ contains request $r$. Let $y_{p} = 1$, if trip $p$ is selected. The formulation is as follows:

    \begin{eqnarray}
        \label{Min_DV_FS_1}
        \min_{y_{p}}  \sum_{p \in \mathcal{T}_{F}} y_{p}
    \end{eqnarray}
    
    \begin{eqnarray}
        \label{Min_DV_FS_2}
        \sum_{p \in \mathcal{T}_{F}}\phi_{rp}y_{p} = 1  & & \forall r \in \mathcal{R}_{F}
        \\
        \label{Min_DV_FS_3}
        y_{p} \in \{0, 1\}  & & \forall p \in \mathcal{T}_{F}
    \end{eqnarray}

\noindent The objective \eqref{Min_DV_FS_1} is to minimize the LV fleet size required to complete all parcel requests. Constraints \eqref{Min_DV_FS_2} ensure that each parcel request is served. Constraints \eqref{Min_DV_FS_3} are the decision variable constraints.

\subsubsection{Maximizing the utility of LVs} \label{subsecMaxLVprofit}
    In practical applications, the company will prioritize ensuring that all LVs are fully utilized. This section addresses the problem of maximizing the utility of LVs when the LV fleet size $|\mathcal{K}_{L}|=\varepsilon$. Let $\mathcal{T}_{F}$ be the set of trips composed of parcel requests only. The binary parameter $\phi_{rp}=1$ if trip $p$ contains request $r$. Parameter $\xi_{p}$ represents the profit that the vehicle can obtain from trip $p$, which is calculated by the Model Eqs. \eqref{SARP_1}-\eqref{SARP_18}. Let $y_{p} = 1$, if trip $p$ is selected. The formulation is as follows:

    \begin{eqnarray}
        \label{Max_DV_Profit_1}
        \max_{y_{p}}  \sum_{p \in \mathcal{T}_{F}} \xi_{p}y_{p}
    \end{eqnarray}
    
    \begin{eqnarray}
        \label{Max_DV_Profit_2}
        \sum_{p \in \mathcal{T}_{F}}\phi_{rp}y_{p} \leq 1  & & \forall r \in \mathcal{R}_{F}
        \\
        \label{Max_DV_Profit_3}
        \sum_{p \in \mathcal{T}_{F}}y_{p} \leq \varepsilon 
        \\
        \label{Max_DV_Profit_4}
        y_{p} \in \{0, 1\}  & & \forall p \in \mathcal{T}_{F}
    \end{eqnarray}

    The objective \eqref{Max_DV_Profit_1} is to maximize the utility of LVs. Constraints \eqref{Max_DV_Profit_2} ensure that each parcel request can be served at most once. Constraint \eqref{Max_DV_Profit_3} ensures that the total number of LVs cannot exceed $\varepsilon$. Constraints \eqref{Max_DV_Profit_4} are the decision variable constraints.

\subsubsection{Maximizing the total RV driver profits}\label{subsecMRVprofit}
    This section addresses the problem of maximizing the total RV driver profits when the RV fleet size is $|\mathcal{K}_{R}|$. $\mathcal{T}_{RV}$ is the candidate trip set of RVs. The set of parcel requests that are completed by LVs is marked as $\mathcal{R}_{F}^{'}$. In the trip-RV assignment problem, it is necessary to ensure that all parcel requests are completed. The binary parameter $\phi_{rp}=1$ if trip $p$ contains request $r$. Parameter $\xi_{p}$ represents the profit that the vehicle can obtain from trip $p$. Let $y_{p} = 1$, if trip $p$ is selected. The formulation is as follows:

    \begin{eqnarray}
        \label{Max_RV_Profit_1}
        \max_{y_{p}} ~ \sum_{p \in \mathcal{T}_{RV}} \xi_{p}y_{p}
    \end{eqnarray}
    \begin{eqnarray}
        \label{Max_RV_Profit_2}
        \sum_{p \in \mathcal{T}_{RV}}\phi_{rp}y_{p} \leq 1  & & \forall r \in \mathcal{R}_{P}
        \\
        \label{Max_RV_Profit_3}
        \sum_{p \in \mathcal{T}_{RV}}\phi_{rp}y_{p} = 1  & & \forall r \in \mathcal{R}_{F} \setminus \mathcal{R}_{F}^{'}
        \\
        \label{Max_RV_Profit_4}
        \sum_{p \in \mathcal{T}_{RV}}y_{p} \leq |\mathcal{K}_{R}|  
        \\
        \label{Max_RV_Profit_5}
        y_{p} \in \{0, 1\}  & & \forall p \in \mathcal{T}_{RV}
    \end{eqnarray}

    The objective \eqref{Max_RV_Profit_1} is to maximize the total RV driver profits. Constraints \eqref{Max_RV_Profit_2} ensure that each passenger request can be served at most once. Constraints \eqref{Max_RV_Profit_3} ensure that all parcel requests are served. Constraint \eqref{Max_RV_Profit_4} ensures that the total number of RVs cannot exceed $|K_{R}|$. Constraints \eqref{Max_RV_Profit_5} are the decision variable constraints.

\section{Case study} \label{sec_CS}
In this section, we consider a case study in Manhattan, New York, to illustrate the impact of the shared transport scheme (SARP-RL) as well as the performance of the proposed solution method. All the computational performances reported below are based on a Microsoft Windows 10 platform with Intel Core i9 - 3.60GHz and 16 GB RAM, using Python 3.8 and Gurobi 9.1.2.

\subsection{Experimental settings}

\subsubsection{Passenger request data}

    The passenger requests are generated based on the Manhattan taxi dataset from \cite{NYCdata2024}. The NYC taxi dataset contains several relevant entries, including submission time, origin, and destination. In the original dataset, the origins and destinations of passenger requests are represented as taxi zones, as shown in Figure \ref{fig_manhattan} (left). In the experiment, we randomly select street intersections within the taxi zone as the origin or destination of the request in the road network. Figure \ref{fig_manhattan} (right) shows the spatial distribution of the number of trip requests in Manhattan between 13:00 and 14:00 on 3 January 2022. In this study, passenger requests are randomly selected during this period.

    \begin{figure}[H]
       \centering
       \includegraphics[width=1.0\textwidth]{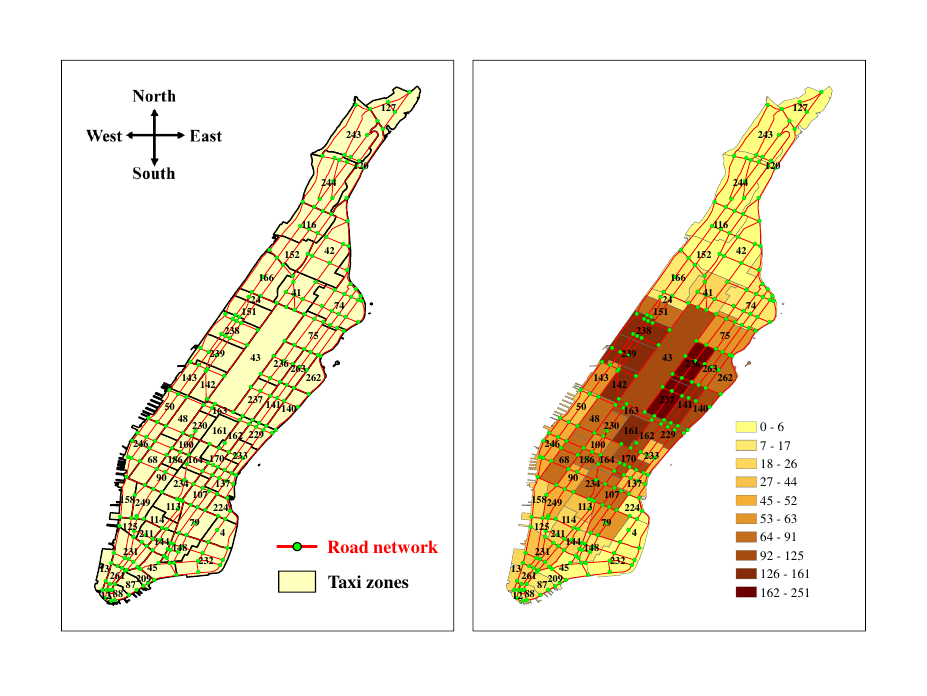}
       \caption{Left: Road network and taxi zones of Manhattan, New York City. Right: Spatial distribution of number of passenger trip requests in Manhattan between 13:00 and 14:00 on January 3, 2022.}
       \label{fig_manhattan}
    \end{figure}

\subsubsection{Parcel request data}

    Inspired by \cite{LKRV2014}, the generation of parcel requests considers various spatial distributions to represent a diversified portfolio of freight demand scenarios.
    
      \begin{itemize}
        \item \textbf{Scatter-Scatter (SS)}: Both origins and destinations are scattered, corresponding to the pattern of personal shipping requests. 
        \item \textbf{Scatter-Cluster (SC)}: The origins are scattered, and destinations are clustered, corresponding to the pattern of first-mile transport to a logistic center.
         \item \textbf{Cluster-Scatter (CS)}: The origins are clustered, and destinations are scattered, corresponding to the pattern of last-mile transport from a logistic center. 
    \end{itemize}
    
 Furthermore, in the SC or CS patterns, we consider different locations of the clustered destinations/origins. For example, SC (South) means the clustered destinations are located in the south of the road network, and CS (North) means the clustered origins are located in the north of the network. Figure \ref{fig_parcel_spatial_distribution} visualizes these different parcel flow patterns. 
    
    \begin{figure}[H]
       \centering
       \includegraphics[width=\textwidth]{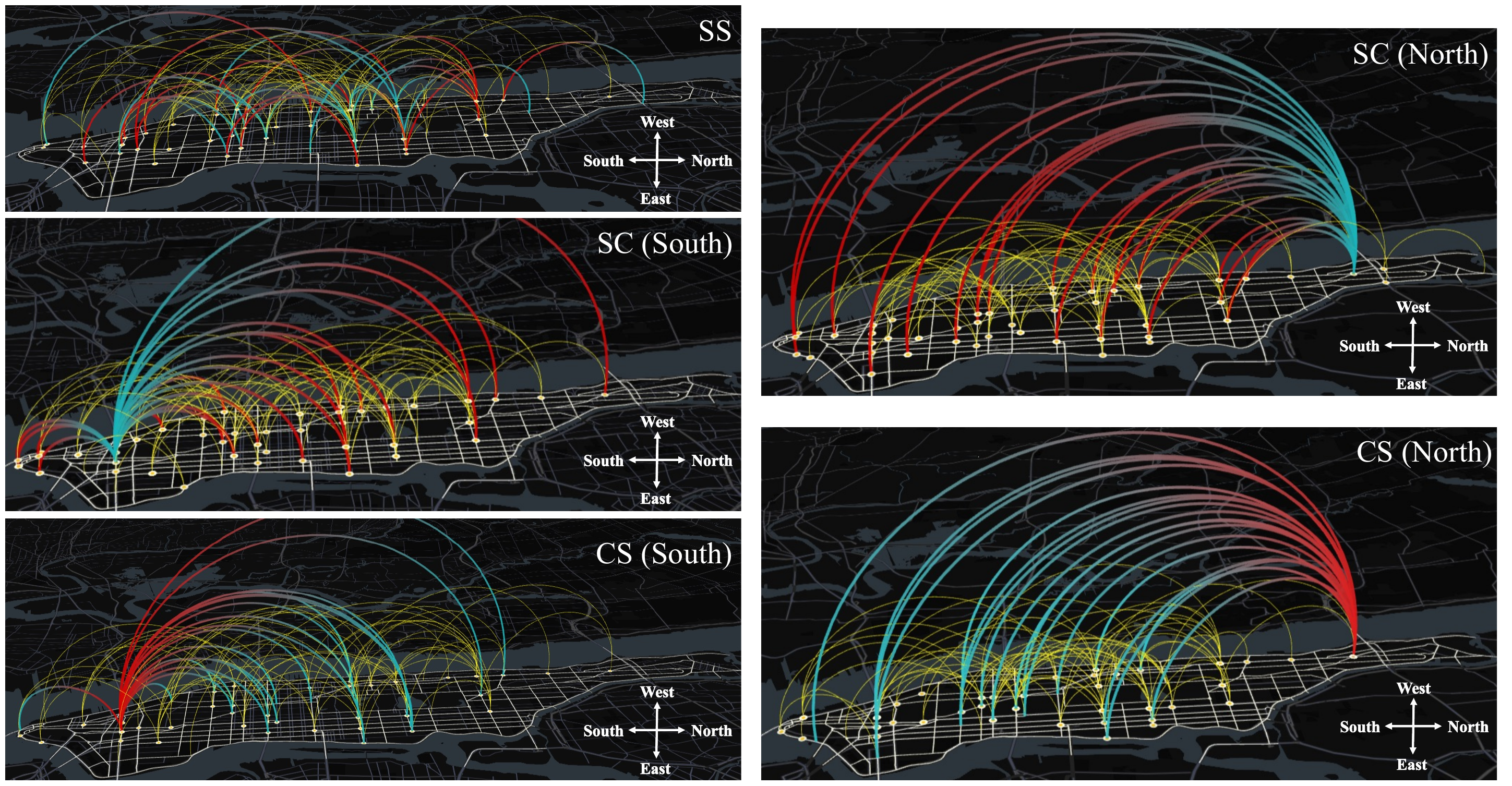}
       \caption{Five spatial distributions of parcel requests: SS, SC (South), SC (North), CS (South) and  CS (North). The red ends of the arcs represent the origins, and the blue ends represent the destinations. The yellow arcs represent passenger requests.}
       \label{fig_parcel_spatial_distribution}
   \end{figure}

\subsubsection{Demand configuration and model parameters}

Given the passenger and parcel demand distributions, we generate the mixed passenger-parcel demand as follows. Given the demand scenario (SS, SC, CS, etc.), we randomly generate $X$ number of passenger requests and $Y$ number of parcel requests, using random seed \#$Z$. For example, SS-76-24-0 means demand scenario SS with 76 passenger and 24 parcel requests generated using random seed \#0. All the results reported below for each demand case are averaged over 8 independent runs (\#0-7) with different seeds.

The parameters used in this study are given in Table \ref{tab_parameter_values}.

\begin{table}[h]
\centering
\caption{Parameters used for the SARP-RL.}
\label{tab_parameter_values}
\resizebox{\columnwidth}{!}{%
\begin{tabular}{|c|c|}
\hline
Parameters                                                            & Values \\ \hline
Vehicle speed $v$                                                     & 30 (km/h)     \\ \hline
Capacity of vehicle $Q_{k}$                                                & 6      \\ \hline
Load of passenger request $q_{i}$, $\forall i \in \mathcal{R}_{P}$          & 4      \\ \hline
Load of parcel request $q_{i}$, $\forall i \in \mathcal{R}_{F}$            & 1      \\ \hline
Maximum number of stops during one passenger service trip $\eta$ & 2 \\ \hline
Maximum wait time between request submission and start of service $\sigma$ & 5 (min)     \\ \hline
Maximum delay tolerated for passenger requests $\Delta_{P}$              & 10 (min)    \\ \hline
Maximum delay tolerated for parcel requests $\Delta_{F}$                 & 15 (min)    \\ \hline
Fixed income associated with serving a passenger request $\alpha$                 & 5      \\ \hline
Fixed income associated with serving a parcel request $\beta$                     & 3      \\ \hline
Variable income per kilometer for serving a passenger request $\gamma_1$    & 2.4    \\ \hline
Variable income per kilometer for serving a parcel request $\gamma_2$      & 1.2    \\ \hline
Average cost per kilometer for vehicle operations (energy, personnel, etc.) $\gamma_3$  & 0.6    \\ \hline
Penalty per minute of travel delay for passenger requests $\gamma_4$      & 0.5  \\ \hline
Number of requests   & 100  \\ \hline
\end{tabular}%
}
\end{table}

\subsection{Evaluation of the optimization results}

We consider the following three benchmark scenarios to demonstrate the potential of the SARP-RL and the effectiveness of the $\varepsilon$-constraint method framework. In this study, we use the fleet size of LVs to represent logistics costs. The reason is as stated in Remark \ref{logistic_cost}.

 \begin{remark}
\label{logistic_cost}
Logistics costs consist of three parts: the fixed costs of the LVs, the operating costs of the LVs, and the costs paid to the RVs. The operating costs of the LVs mainly depend on the sum of the distances between the origin and destination of all the parcel requests served by the LVs. The costs paid to RVs are calculated based on \eqref{profV} and also depend mainly on the sum of the travel distances between the origin and destination of all parcel requests served by RVs. According to the definition of SARP-RL, all parcel requests must be completed under the coordination of RVs and LVs. Therefore, the total distance between the origin and destination of all parcel requests served by RVs and LVs is certain, which means that the sum of the operating costs of LVs and the costs paid to RVs is basically unchanged in different solutions. Therefore, in this study, we can replace logistics costs with the fleet size of LVs.
 \end{remark}

\begin{enumerate}
    \item[(1)] \textbf{RV-only:} The maximum RV profits of  serving passenger requests only.
    \item[(2)] \textbf{LV-only:} The minimum LV fleet size required to complete all parcel requests.
    \item[(3)] \textbf{SARP:} Existing studies on SARP \citep{LKRV2014, YPRSJ2018} focus on optimizing RV operations without considering their coordination with the LVs. In this benchmark scenario, we first maximize RV profits based on passenger and parcel requests. Then, for the remaining parcel requests, we solve a minimum fleet size problem for the LVs to serve them. 
\end{enumerate}

Table \ref{tab_pareto_RV_LV} presents the Pareto-optimal solutions of the SARP-RL tested on the demand scenario SS-76-24, as well as comparison with the three benchmarks. The results indicate that: 
\begin{itemize}
\item[(1)] SARP-RL can simultaneously reduce the number of LVs and increase RV profits compared to RV-only and LV-only. When the RV number is small (e.g. 5), SARP-RL has an insignificant impact on RV profits (with some mixed results), although its performance on LV fleet size reduction is strong. When the number of RVs increases (above 10, for example), the SARP-RL excels in terms of both LV fleet size (in fact, no LVs are needed) and RV profits (mostly over 15\% increase; see Figure \ref{fig_RV_increase} Left), compared to the RV-only and LV-only modes. 

\item[(2)] In most cases, SARP-RL can reduce logistics costs (LV fleet size) compared to SARP (see Figure \ref{SARP_different_parcel_distribution} Left), by coordinating RV and LV services. However, the RV profits in the SARP mode are higher than those in the SARP-RL (see Figure \ref{SARP_different_parcel_distribution} Right), because of its profit maximizing nature. 

\end{itemize}

\begin{table}[h!]
\centering
\caption{The Pareto-optimal solutions of the SARP-RL and the comparison results with three benchmarks based on the demand scenario SS-76-24. Each 2-tuple $(\varepsilon, \Phi_{RV})$ in the Pareto-optimal solutions represents the fleet size of LVs and the total RV profits, respectively.}
\label{tab_pareto_RV_LV}
\resizebox{\columnwidth}{!}{%
\begin{tabular}{|c|c|c|c|c|c|c|}
\hline

  \multirow{3}{*}{\begin{tabular}[c]{@{}c@{}}Number\\ of RVs\end{tabular}} &
	\multirow{3}{*}{Seed no} &
  \multirow{3}{*}{\begin{tabular}[c]{@{}c@{}}SARP-RL Pareto-optimal \\ solutions $(\varepsilon,\,\Phi_{RV}^{\varepsilon})$\end{tabular}} &
  \multirow{3}{*}{\begin{tabular}[c]{@{}c@{}}LV-only\\ fleet size\end{tabular}} &
   \multirow{3}{*}{\begin{tabular}[c]{@{}c@{}}RV-only\\ profits\end{tabular}} & 
\multirow{3}{*}{\begin{tabular}[c]{@{}c@{}}SARP \\ fleet size\end{tabular}} & 
  \multirow{3}{*}{\begin{tabular}[c]{@{}c@{}}SARP \\ RV profit \end{tabular}} \\
              &    &         & &                 &   &                           \\ 
                &    &         & &                        &     &                    \\
              \hline
\multirow{12}{*}{5} & \#0  & (3, 332); (4, 340) & 8 & 317 & 5 & 401     \\ \cline{2-7} 
 &
  \#1 &
  \begin{tabular}[c]{@{}c@{}}(1, 312); (2, 340); \\ (3, 348); (6, 361)
  \end{tabular} &
  6 &
  361 & 4 & 
443  \\ \cline{2-7} 
 & \#2 &\begin{tabular}[c]{@{}c@{}}(2, 334); (4, 336); (5, 350)\end{tabular} &7 &339 & 6&408               \\ \cline{2-7} 
 & \#3  & (2, 344); (3, 362) & 7 & 341  &5 & 422   \\\cline{2-7} 
 & \#4  & (2, 328); (3, 331) & 6 & 328  &4 & 423
\\\cline{2-7} 
 & \#5 & \begin{tabular}[c]{@{}c@{}}(1, 299); (4, 313);\\ (5, 329); (6, 343)\end{tabular} & 6& 343 & 4&411 \\ \cline{2-7} 
 & \#6 &\begin{tabular}[c]{@{}c@{}}(2, 243); (3, 277);\\ (4, 301);(5, 303);\\ (6, 315); (7, 319)\end{tabular} &7&319& 5&
371 \\ \cline{2-7} 
 &
  \#7 &
  \begin{tabular}[c]{@{}c@{}}(2, 261); (3, 319); \\ (4, 323); (6, 328)\end{tabular} &
  7 &
  328 & 5&
406
  \\ 
  \hline
\multirow{8}{*}{10} & \#0 & (0, 710)              & 8 & 579  &5 & 727            \\ \cline{2-7} 
 & \#1 & (0, 746)              & 6 & 654  & 4  & 777             \\ \cline{2-7} 
 & \#2 & (0, 707)              & 7 & 624  & 3 & 740            \\ \cline{2-7} 
 & \#3 & (0, 724)              & 7 & 619  & 3 & 754            \\ \cline{2-7} 
 & \#4 & (0, 737)              & 6 & 616 &4  & 749            \\ \cline{2-7} 
 & \#5 & (0, 679)              & 6 & 598 &3  & 712            \\ \cline{2-7} 
 & \#6 & (0, 651)              & 7 & 573  &4 & 680            \\ \cline{2-7} 
 & \#7 & (0, 696)              & 7 & 592 &4  & 717           \\ \hline
\multirow{8}{*}{15} & \#0 & (0, 985)              & 8 & 813  & 3  & 989            \\ \cline{2-7}
 & \#1 & (0, 1023)             & 6 & 878  & 2 & 1032           \\ \cline{2-7}
 & \#2 & (0, 992)              & 7 & 853 & 2  & 991           \\ \cline{2-7}
 & \#3 & (0, 1002)             & 7 & 849   & 2   & 1010            \\ \cline{2-7}
 & \#4 & (0, 1003)             & 6 & 840  & 2 & 1005          \\ \cline{2-7}
 & \#5 & (0, 939)              & 6 & 812  & 3 & 948            \\ \cline{2-7}
 & \#6 & (0, 925)              & 7 & 783 & 2  & 931           \\ \cline{2-7}
 & \#7 & (0, 963)              & 7 & 805 &3  & 963           \\ \hline
\multirow{8}{*}{20} & \# 0 & (0, 1182)             & 8 & 981 & 0 & 1182            \\ \cline{2-7}
 & \#1 & (0, 1216)             & 6 & 1048 &1  & 1219            \\ \cline{2-7}
 & \#2 & (0, 1163)             & 7 & 1005 &1 & 1163           \\ \cline{2-7}
 & \#3 & (0, 1202)             & 7 & 1028 & 2 & 1204            \\ \cline{2-7}
 & \#4 & (0, 1196)             & 6 & 1006 &2 & 1196             \\ \cline{2-7}
 & \#5 & (0, 1133)             & 6 & 974  & 3 & 1135             \\ \cline{2-7}
 & \#6 & (0, 1120)             & 7 & 973  & 1 & 1121           \\\cline{2-7}
 & \#7 & (0, 1143)             & 7 & 964  & 2 & 1143          \\ \hline
\end{tabular}%
}
\end{table}

    \begin{figure}[h!]
       \centering
       \includegraphics[width=1.0\textwidth]{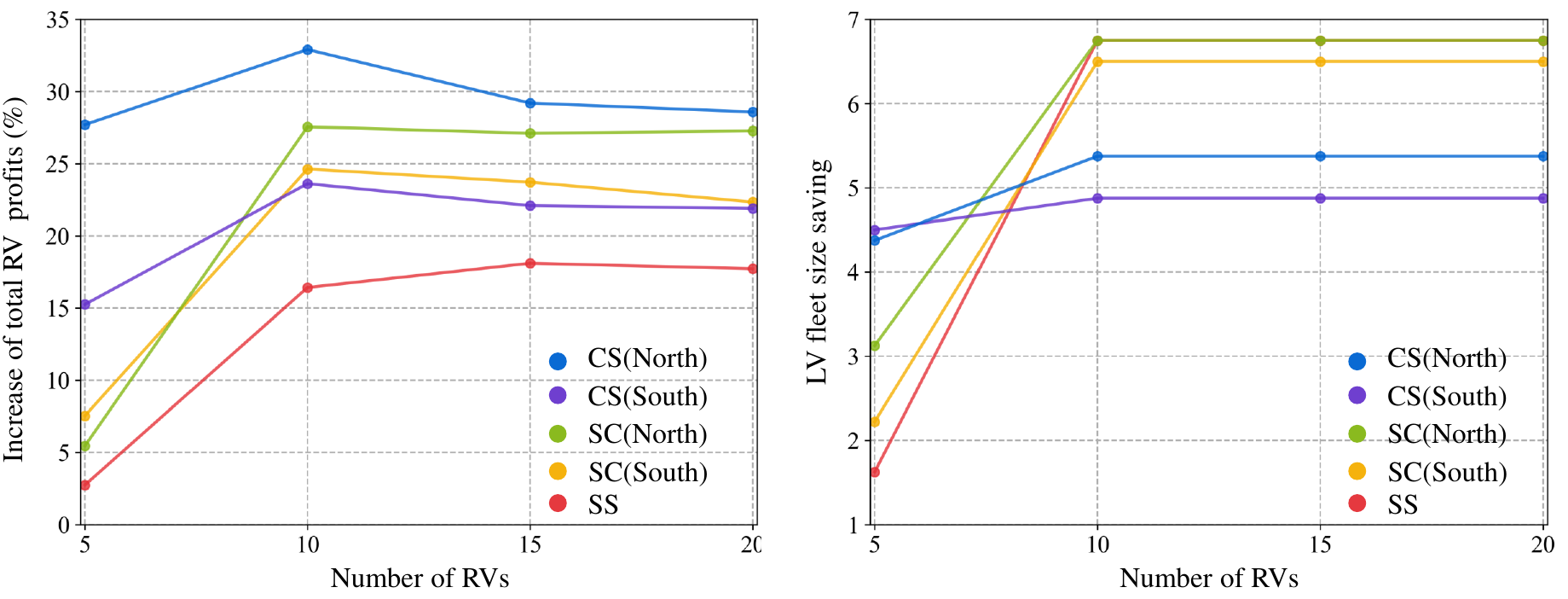}
       \caption{Left: Increase of total RV profits (\%) of five parcel distributions under the same number of RVs for SARP-RL versus RV-only. Right: LV fleet size saving of five parcel distributions under the same number of RVs for SARP-RL versus LV-only. The experimental results are based on the demand scenarios SS-76-24, SC(South)-76-24, SC(North)-76-24, CS(South)-76-24 and CS(North)-76-24.}
       \label{fig_RV_increase}
   \end{figure}

    \begin{figure}[h!]
       \centering
       \includegraphics[width=1.0\textwidth]{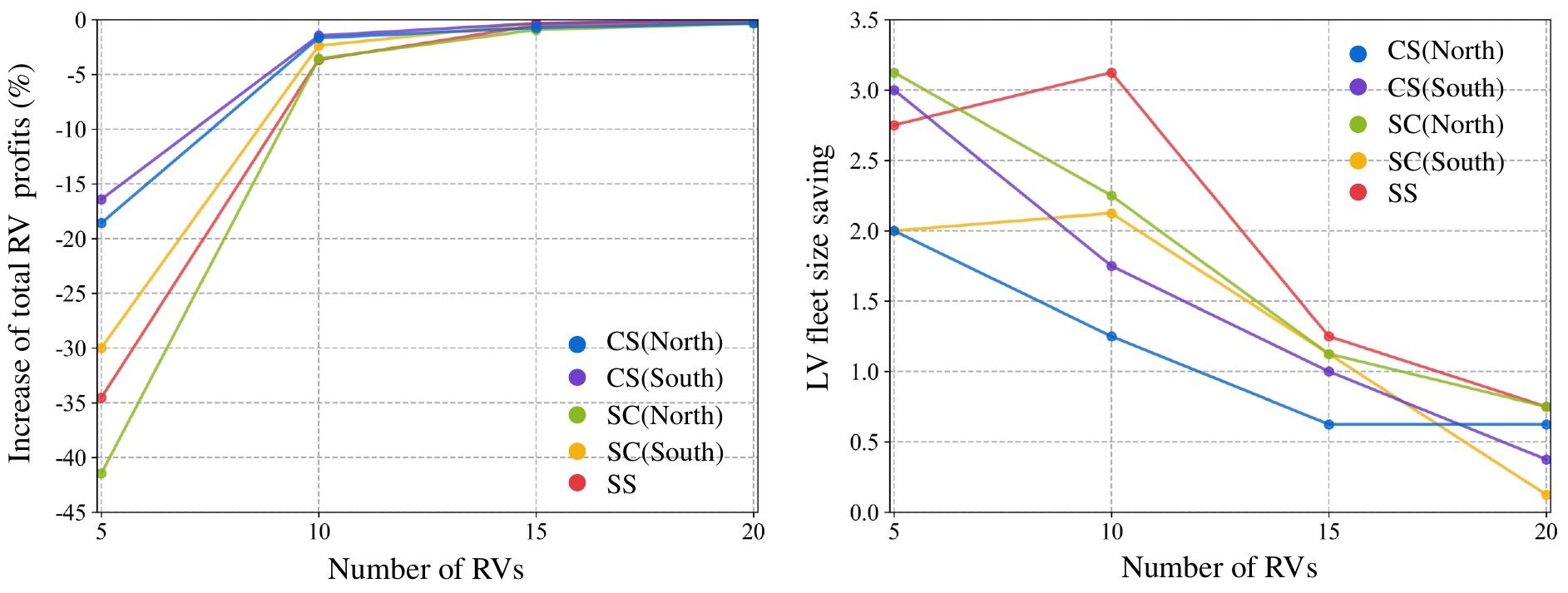}
       \caption{Left: Increase of total RV profits (\%) of five parcel distributions under the same number of RVs for SARP-RL versus SARP. Right: LV fleet size saving of five parcel distributions under the same number of RVs for SARP-RL versus SARP. The experimental results are based on the demand scenarios SS-76-24, SC(South)-76-24, SC(North)-76-24, CS(South)-76-24 and CS(North)-76-24.}
       \label{SARP_different_parcel_distribution}
   \end{figure}

	In terms of average profit per RV, Figure \ref{fig_per_vehicle} shows that:(1) The average profit per RV decreases as the number of RVs increases because online RVs tend to be saturated; (2) The SARP-RL in terms of average profit per RV (mostly over 15\% - 30\% increase), compared to the RV-only.
    \begin{figure}[h!]
       \centering
       \includegraphics[width=.6\textwidth]{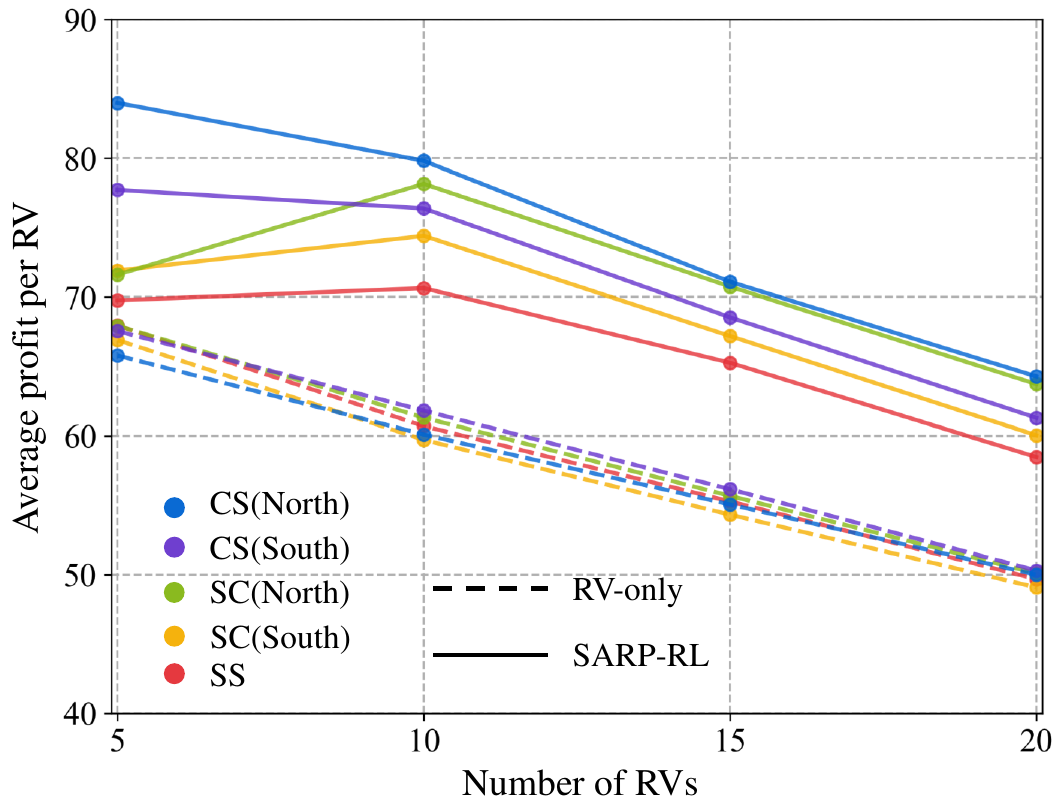}
       \caption{Average profit per vehicle of five parcel distributions under the same number of RVs. The experimental results are based on the demand scenarios SS-76-24, SC(South)-76-24, SC(North)-76-24, CS(South)-76-24 and CS(North)-76-24.}
       \label{fig_per_vehicle}
   \end{figure}

    Regarding the acceptance rate of passenger requests, Figure \ref{fig_acceptance_rate} shows that the SARP-RL will lead to a decrease in passenger acceptance rate compared to the RV-only mode. By comparing the results of different parcel distributions, the results indicate that: (1) if the cluster point of the origin or destination of the parcel requests is located in remote areas, this has a significant impact on the acceptance rate of the passenger requests, for example, SC(North) and CS(North) decrease by about 10\%-20\%. (2) if the distribution of parcel requests and passenger requests is similar, the impact on the acceptance rate of the passenger requests is relatively small, e.g. SS and SC(South) decrease by about 5\%-10\%.

    \begin{figure}[h!]
       \centering
       \includegraphics[width=.7\textwidth]{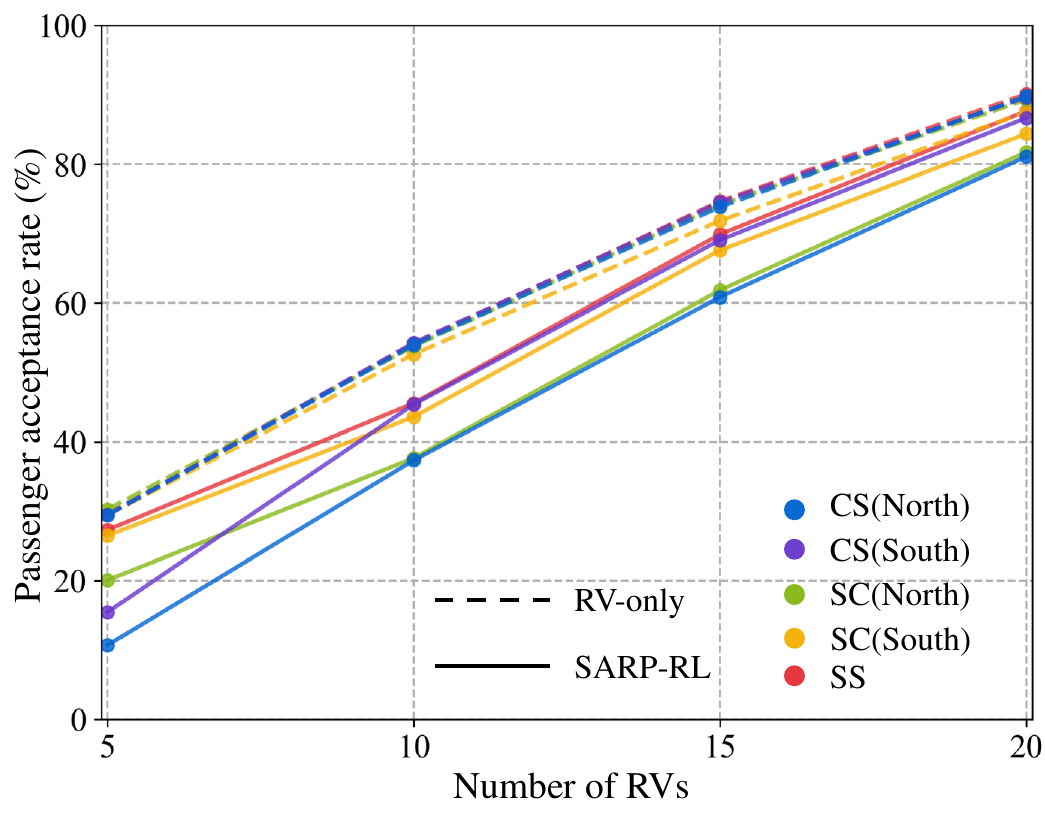}
       \caption{Passenger acceptance rate (\%) of five parcel distributions under the same number of RVs. The experimental results are based on the demand scenarios SS-76-24, SC(South)-76-24, SC(North)-76-24, CS(South)-76-24 and CS(North)-76-24.}
       \label{fig_acceptance_rate}
   \end{figure}

We explore the impact of different passenger/parcel request ratios when the total number of requests remains constant. Figure \ref{fig_different_requests} shows that when the proportion of parcel requests increases, the increase of total RV profits (\%) of the SARP-RL compared to RV-only increases. This is because more parcel requests can generate more additional profits for RVs. Figure \ref{SARP_different_parcel_num} shows that with the increase of the proportion of parcel requests, compared with SARP, SARP-RL has more obvious savings on logistics costs (LV fleet size) but also greater impact on the RV profits. The latter is because the profits of RVs from parcel requests are lower than that from passenger requests. To support this view, we conduct a sensitivity analysis on the unit price of parcel transport service (i.e. variable income per kilometer for serving a parcel request $\gamma_2$), as shown in Figure \ref{SARP_different_gamma2}. The results indicate that with the increase in $\gamma_2$, the impact on RV profits is reduced but the logistics cost (LV fleet size) that can be saved is also reduced for SARP-RL versus SARP.

    \begin{figure}[H]
       \centering
       \includegraphics[width=.7\textwidth]{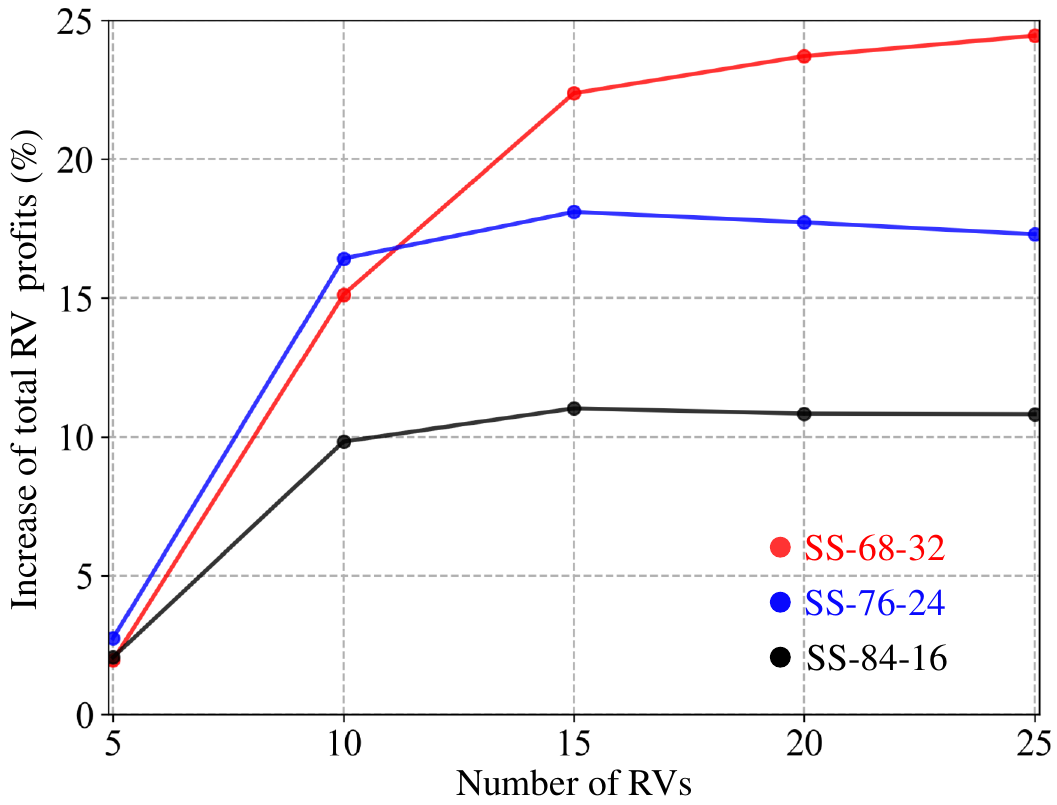}
       \caption{Increase of total RV profits (\%) of different passenger/parcel request ratios under the same number of RVs.}
       \label{fig_different_requests}
   \end{figure}

    \begin{figure}[H]
       \centering
       \includegraphics[width=1.0\textwidth]{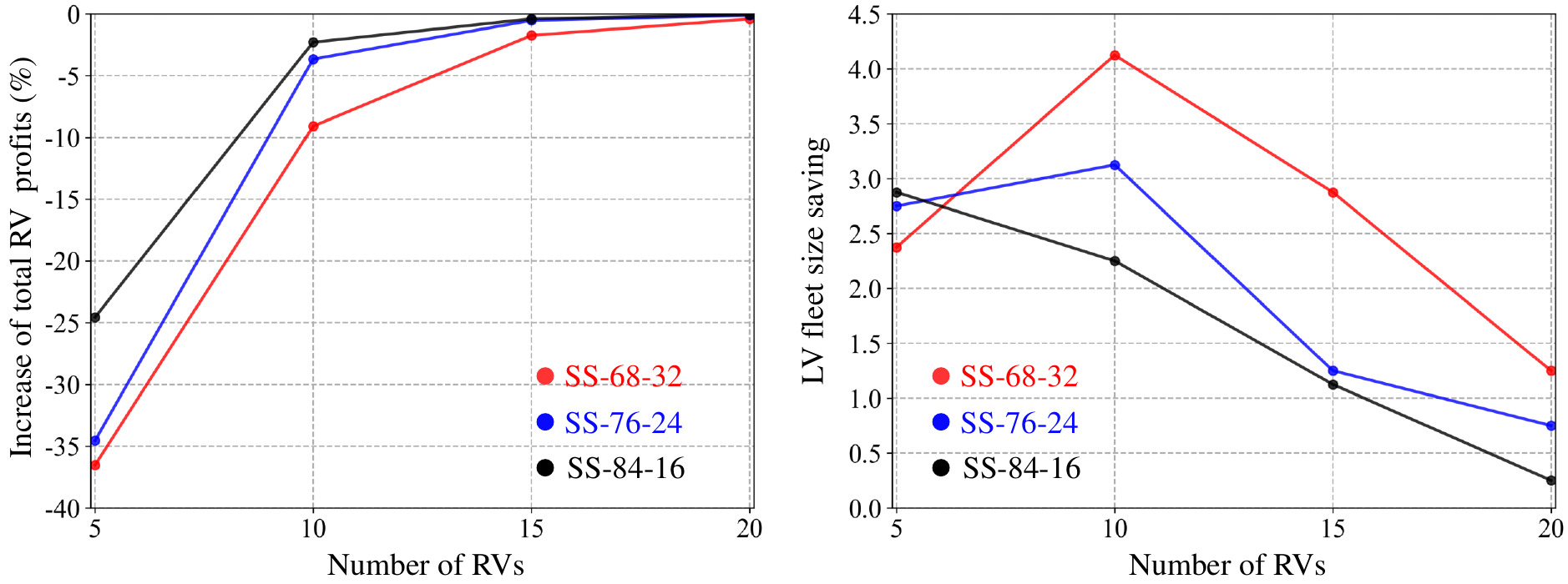}
       \caption{Left: Increase of total RV profits (\%) of different passenger/parcel request ratios under the same number of RVs for SARP-RL versus SARP. Right: LV fleet size saving of different passenger/parcel request ratios under the same number of RVs for SARP-RL versus SARP.}
       \label{SARP_different_parcel_num}
   \end{figure}

    \begin{figure}[H]
       \centering
       \includegraphics[width=1.0\textwidth]{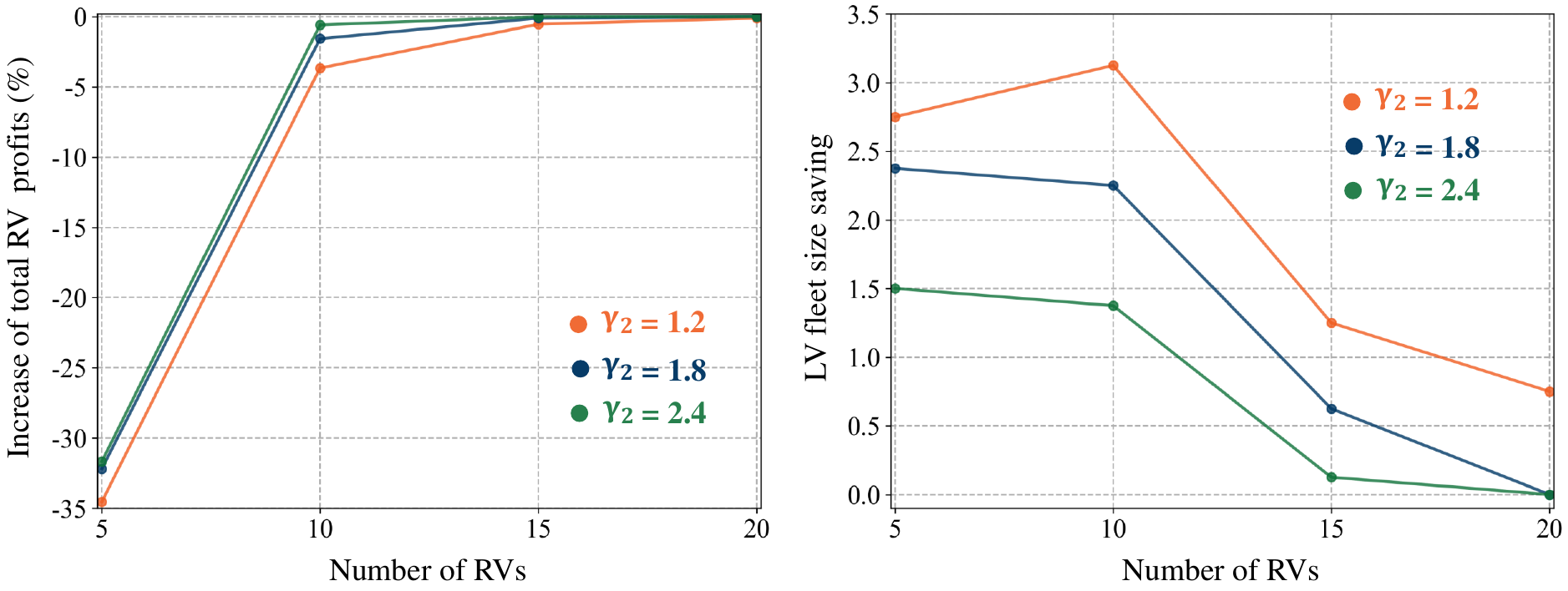}
       \caption{Left: Increase of total RV profits (\%) of different $\gamma_2$ under the same number of RVs for SARP-RL versus SARP. Right: LV fleet size saving of different $\gamma_2$ under the same number of RVs for SARP-RL versus SARP. The experimental results are based on the demand scenario SS-76-24.}
       \label{SARP_different_gamma2}
    \end{figure}

\subsection{Computational time}

The computational time of the entire solution procedure consists of three parts: 
\begin{itemize}
\item[(1)] The time to solve for the optimal route for given requests \eqref{SARP_1}-\eqref{SARP_18}, which is a MILP with limited size as the maximum trip length $l$ is quite small (see Figure \ref{fig_trips}). According to Table \ref{tab_trip_combinations}, the average time taken to perform this routine is $3\times 10^{-3}$ s. 

\item[(2)] The time to generate all possible trips. According to Table \ref{tab_trip_combinations}, the proposed solution scheme (with Algorithm \ref{alg_GNT}) requires solving the aforementioned MILP between $5.5\times 10^5$ and $9.7\times 10^5$ times, which amounts to 1650 s to 2910 s. It should be noted that such computations can be accelerated via parallel computing as these MILPs are completely independent. 

\item[(3)] The time to solve the trip-vehicle assignment problem (Algorithm \ref{alg_SF}). For example, it takes about 10 s to solve the $\varepsilon$-constraint method for $N_{L}^{\text{min}}=7$. 

\end{itemize}

In summary, the vast majority of the total solution time is associated with generating all feasible trips, which is considerably reduced by virtue of Algorithm \ref{alg_GNT} (by a factor of 7 to 11 compared to Algorithm \ref{alg_GAT}; see Table \ref{tab_trip_combinations}), and can be further reduced via straightforward parallelization.

\section{Conclusion and discussion} \label{sec_CD}
	This paper extends the share-a-ride problem (SARP), which optimizes ride-hailing vehicle (RV) operations to serve passenger and parcel requests, to the {\it share-a-ride problem with ride-hailing and logistic vehicles} (SARP-RL), which internalizes logistic operations to enable a balanced and systematic analysis of integrated passenger-freight transport based on shared mobility.

	The SARP-RL attempts to maximize the total RV profits and minimize the LV fleet size. To solve such a bi-objective optimization problem, we propose an exact solution method based on a decomposition scheme and an $\varepsilon$-constraint method  to compute all Pareto-optimal solutions, which can be solved efficiently using off-the-shelf solvers. The following findings are made from extensive numerical tests.

\begin{itemize}
    \item SARP-RL can reduce the number of LVs and increase RV driver profits compared to RV-only and LV-only. When the number of RVs increases, the proposed solution excels in terms of both LV fleet size (no need for any LVs) and RV profits (mostly over 15\% increase), compared to the RV-only and LV-only modes. 
    \item SARP-RL can reduce the fleet size of LVs compared to SARP via coordinated RV and LV operations. This implies that unilaterally maximizing RV profits when serving both passenger and parcel requests does not always reduce logistic costs. 
    
    \item SARP-RL may lead to a decrease in passenger acceptance rate compared to the RV-only mode. In particular, if the cluster point of the origin or destination of the parcel requests is located in remote areas, this has a significant impact on the acceptance rate of the passenger requests.
 
    \item When the proportion of parcel requests is higher, SARP-RL saves more on logistics costs (LV fleet size) than SARP (because the former explicitly considers logistic costs), but at the price of more RV profits.
    
    \item The computational time of the proposed solution framework is predominantly determined by the number of feasible trips. To enumerate all such trips and check their feasibility by solving MILPs, this work proposes a novel enumeration method that can reduce the number of MILPs to be solved by 7-11 times compared to existing literature \citep{ASWFR2017}. This enumeration procedure can be further accelerated via parallel computing.
\end{itemize}

\section*{Data availability}
The datasets that support the findings of this study are openly available in \url{https://github.com/Shenglin807/SARP_RL_Dataset/tree/master}.

\section*{Acknowledgement}
This work is partly supported by the National Natural Science Foundation of China through grants 72071163.

\end{document}